\newtheorem{thm}{Theorem}[section]
\newtheorem{prop}[thm]{Proposition}
\newtheorem{cor}[thm]{Corollary}
\theoremstyle{remark}
\newtheorem{rem}[thm]{Remark}
\newtheorem{exa}[thm]{Example}
\theoremstyle{definition}
\newtheorem{defi}[thm]{Definition}
\newcommand{\Z}{\mathbb{Z}}
\newcommand{\R}{\mathbb{R}}
\newcommand{\N}{\mathbb{N}}
\newcommand{\ol}[1]{#1}
\newcommand{\UDBG}{\sf{UDBG}}
\DeclareMathOperator{\QIE}{QIE}
\DeclareMathOperator{\id}{id}
\DeclareMathOperator{\coker}{coker}
\def\epsilon{\varepsilon}
\DeclareMathOperator{\uf}{uf}
\DeclareMathOperator{\Mod}{{\sf Mod}}
\def\args{\;\cdot\;}
\def\fa#1{%
 \forall_{#1}\;\;}
\newcommand\norm{\bBigg@{0.6}}
\newcommand{\supn}[2][norm]{\csname #1l\endcsname\|#2%
                           \csname #1r\endcsname\|_{\infty}}
\def\fclz#1{%
  [#1]_\Z}
\def\fclr#1{%
  [#1]_\R}
\def\huf{%
  H^{\uf}}
\def\cuf{%
  C^{\uf}}
\def\hufp{%
  H^{\uf,(+)}}
\def\cufp{%
  C^{\uf,(+)}}
\def\draftinfo{}
\author{Francesca Diana}
\author{Clara L\"oh}
\title{The $\ell^\infty$-semi-norm on uniformly finite homology}
\date{\today.\ \copyright{\ F.~Diana, C.~L\"oh 2015}. 
    This work was supported by the GRK~1692 \emph{Curvature, Cycles and Cohomology} 
    (funded by the DFG, Universit\"at Regensburg).
    \draftinfo\\
    MSC~2010 classification: 55N35, 20F65, 52C25}
\begin{document}

\begin{abstract}
  Uniformly finite homology is a coarse homology theory, defined via
  chains that satisfy a uniform boundedness condition. By
  construction, uniformly finite homology carries a canonical
  $\ell^\infty$-semi-norm. We show that, for uniformly discrete spaces
  of bounded geometry, this semi-norm on uniformly finite homology in
  degree~$0$ with $\Z$-coefficients allows for a new formulation of
  Whyte's rigidity result. In contrast, we prove that this semi-norm
  is trivial on uniformly finite homology with $\R$-coefficients in
  higher degrees.
\end{abstract}

\phantom{.}
\vspace{-1.1\baselineskip}

\maketitle

\section{Introduction}

Enriching (co)homology theories with semi-norms allows for a 
rich interaction between geometry, group theory, and topology;
a prominent example of this phenomenon is given by bounded cohomology
and simplicial volume~\cite{vbc}. In the present article, we study the
$\ell^\infty$-semi-norm on uniformly finite homology.

Uniformly finite homology is a coarse homology theory, defined via
simplicial chains that satisfy a uniform boundedness condition,
originally introduced by Block and
Weinberger~\cite{blockweinberger}. Uniformly finite homology has
various applications to amenability~\cite{blockweinberger},
rigidity~\cite{whyte}, aperiodic tilings~\cite{blockweinberger},
large-scale notions of dimension~\cite{dranishnikov}, and positive
scalar curvature~\cite{blockweinberger}.

By construction, uniformly finite homology carries additional
structure, namely a canonical $\ell^\infty$-semi-norm
(Section~\ref{sec:ufsup}). It is therefore a natural question to 
study which refined information is encoded in this semi-norm. 

In this article, we show that, for uniformly discrete spaces of
bounded geometry (UDBG~spaces, see Appendix~\ref{appx:uf}), this
semi-norm is rigid in degree~$0$ (Section~\ref{sec:rigidity}): For
example, the $\R$-fundamental class of an amenable space has
semi-norm~$1$, and the $\ell^\infty$-semi-norm with $\Z$-coefficients
allows for a new formulation of Whyte's rigidity result:

\begin{thm}[rigidity]\label{thm:rigidity}
  Let $X$ and $Y$ be UDBG spaces and let $f \colon X \longrightarrow Y$ be 
  a quasi-isometry. Then the following are equivalent:
  \begin{enumerate}
    \item The map~$f \colon X \longrightarrow Y$ is uniformly close 
      to a bilipschitz equivalence.
    \item For all~$k \in \N$, the map~$\huf_k(f;\Z) \colon
      \huf_k(X;\Z) \longrightarrow \huf_k(Y;\Z)$ is an isometric
      isomorphism with respect to the $\ell^\infty$-semi-norm.
    \item The map~$\huf_0(f;\Z) \colon \huf_0(X;\Z) \longrightarrow
      \huf_0(Y;\Z)$ is an isometric isomorphism with respect to the
      $\ell^\infty$-semi-norm.
  \end{enumerate}
\end{thm}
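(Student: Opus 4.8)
The plan is to prove this as a cycle of implications $(1) \Rightarrow (2) \Rightarrow (3) \Rightarrow (1)$, with the main work concentrated in the equivalence between the geometric condition and the degree-$0$ statement. The bridge between geometry and homology is Whyte's theorem, which (in its original form) characterizes when a quasi-isometry of UDBG spaces is uniformly close to a bilipschitz equivalence in terms of the induced map on $\huf_0(\args;\Z)$; the novelty claimed here is reformulating that condition in terms of the $\ell^\infty$-semi-norm, so I expect the proof to reorganize Whyte's argument around the fundamental classes and their norms rather than to reprove it from scratch.

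For $(1) \Rightarrow (2)$, I would first recall that any quasi-isometry $f$ induces an isomorphism $\huf_k(f;\Z)$ in every degree (functoriality of uniformly finite homology under quasi-isometries, from Appendix~\ref{appx:uf}), and that this isomorphism is automatically norm-nonincreasing because $f$ has bounded multiplicity on the chain level; pushing a uniformly finite chain forward along a map with controlled fibers cannot increase the $\ell^\infty$-norm by more than a multiplicative constant, and for a bilipschitz map one can arrange this constant to be $1$ in both directions. The key point is that when $f$ is uniformly close to a bilipschitz equivalence $g$, the chain map $g_*$ can be chosen to be a bijection on the vertex/simplex level up to uniformly bounded fibers of size $1$, so that $\supn{g_* c}$ and $\supn{c}$ agree; uniform closeness then guarantees $\huf_k(f;\Z) = \huf_k(g;\Z)$, making $\huf_k(f;\Z)$ isometric. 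The implication $(2) \Rightarrow (3)$ is trivial, being mere specialization to $k = 0$.

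The substantive direction is $(3) \Rightarrow (1)$, and I expect this to be the main obstacle. Here I would exploit the concrete description of $\huf_0(X;\Z)$ and its $\ell^\infty$-semi-norm: a $0$-cycle is a bounded $\Z$-weighting of the points of $X$, and the semi-norm records the supremum of the absolute weights modulo boundaries. The plan is to show that if $\huf_0(f;\Z)$ is an \emph{isometric} isomorphism, then the pushforward cannot collapse or duplicate points with unbounded multiplicity, since doing so would change the $\ell^\infty$-norm of suitable test cycles (for instance, indicator-type cycles supported near a point whose preimage under $f$ is large would acquire a larger norm under $f_*$, contradicting isometry). Isometry thus forces the map to have fibers of multiplicity essentially $1$ in a coarse sense, which is precisely the combinatorial input Whyte's criterion needs to upgrade the quasi-isometry to a bilipschitz equivalence. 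The delicate part will be converting the metric statement (norms are preserved) into the combinatorial statement (fibers are uniformly bounded by $1$ up to coarse equivalence), because the semi-norm is defined modulo boundaries and one must rule out the possibility that excess multiplicity is hidden inside a boundary; I would handle this by carefully choosing test cycles whose norm-minimizing representatives are controlled, reducing to Whyte's matching/Hall-type argument on the associated bipartite graph to produce the bilipschitz equivalence uniformly close to $f$.
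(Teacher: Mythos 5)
Your skeleton agrees with the paper's: $(1) \Rightarrow (2)$ follows because a bilipschitz equivalence is a bijection inducing a norm-preserving bijection on the chain level, combined with quasi-isometry invariance (this is Proposition~\ref{prop:bilipisom}); $(2) \Rightarrow (3)$ is trivial; and $(3) \Rightarrow (1)$ should reduce to Whyte's rigidity theorem (Theorem~\ref{thm:whyterigidity}) by showing that isometry forces $\huf_0(f;\Z)(\fclz X) = \fclz Y$. The first two implications are fine. The gap is in your mechanism for the third. You propose to deduce from isometry that $f$ has ``fibers of multiplicity essentially~$1$'' and assert that this is ``precisely the combinatorial input Whyte's criterion needs.'' It is not, for two reasons. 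First, the fibers of any quasi-isometry between UDBG spaces are automatically uniformly bounded (bounded geometry), so there are no points with large preimage for your test cycles to detect; moreover, locally supported test cycles carry no information at all, since every finitely supported $0$-cycle $c$ is a boundary (apply Theorem~\ref{thm:deg0} with $C = \sum_{x}|c_x|$ and $r=1$). Second, injectivity is neither necessary nor sufficient for being uniformly close to a bilipschitz equivalence: the inclusion $i \colon 2\Z \longrightarrow \Z$ is an isometric embedding with fibers of cardinality at most~$1$, yet it is not uniformly close to any bilipschitz equivalence, because $\huf_0(i;\Z)(\fclz{2\Z}) = [\chi_{2\Z}] \neq \fclz{\Z}$. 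What Whyte's criterion requires is exactly this global identity of fundamental classes, which measures a coarse density excess or deficiency of the image, not local multiplicity.

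The idea you are missing is the paper's Proposition~\ref{prop:fclbig}, a purely norm-theoretic characterization of the fundamental class: among all classes $\alpha \in \huf_0(X;\Z)$ with $\supn{\alpha} \leq 1$, the class $\fclz X$ is the \emph{unique} one such that $\supn{\alpha + \beta} > 1$ for every $\beta \in \hufp_0(X;\Z)\setminus\{0\}$, i.e.\ every non-zero class with a non-negative representative. Both halves use integrality crucially. For the first half: if $c$ represents $\alpha \neq \fclz X$ with $|c_x| \leq 1$ and $c_x \in \Z$, then $1 - c_x \geq 0$, so $\beta := \fclz X - \alpha$ is a non-zero non-negative class with $\supn{\alpha+\beta} = \supn{\fclz X} \leq 1$. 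For the second half: if $b$ is a non-negative cycle and $c$ represents $\fclz X + [b]$ with $|c_x|\leq 1$, then $\sum_{x}(1+b_x-c_x)\cdot x$ is a null-homologous cycle with $1+b_x-c_x \geq b_x \geq 0$, so Theorem~\ref{thm:deg0} gives constants bounding $\sum_{x\in F} b_x$ by $C\cdot|\partial_r F|$, and the criterion applied again yields $[b]=0$. Since a quasi-isometry maps $\hufp_0(X;\Z)$ \emph{onto} $\hufp_0(Y;\Z)$ (Remark~\ref{rem:hufpmaps}), this characterization is transported by the isometric isomorphism $\huf_0(f;\Z)$, which forces $\huf_0(f;\Z)(\fclz X) = \fclz Y$; Whyte's theorem then concludes. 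Note that any correct argument must use $\Z$-coefficients in this essential way: Proposition~\ref{prop:deg0R} exhibits a quasi-isometry inducing isometric isomorphisms on $\huf_k(\args;\R)$ in all degrees that is \emph{not} uniformly close to a bilipschitz equivalence, and your proposal never identifies where integrality enters. Finally, your Hall-type matching plan would amount to reproving Whyte's theorem; once the fundamental class identity is established, that theorem can simply be quoted.
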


The corresponding equivalence for $\R$-coefficients does \emph{not}
hold (Proposition~\ref{prop:deg0R}). 

In contrast, in higher degrees, the $\ell^\infty$-semi-norm on uniformly 
finite homology (with $\R$-coefficients) is trivial: 

\begin{prop}[vanishing: non-amenable case]\label{prop:vanishingnonamenable}
  Let $X$ be a non-amenable UDBG space and let $k \in \N$. Then the
  $\ell^\infty$-semi-norm on~$\huf_k(X;\R)$ is trivial.
\end{prop}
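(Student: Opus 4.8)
The plan is to prove vanishing of the $\ell^\infty$-semi-norm by exhibiting, for every uniformly finite cycle $z$ representing a class in $\huf_k(X;\R)$ with $k\ge 1$, a boundary that we can add to $z$ to make its $\ell^\infty$-norm arbitrarily small. The key structural input is non-amenability: by the Block--Weinberger characterization, a UDBG space $X$ is non-amenable if and only if $\huf_0(X;\R)=0$, equivalently if and only if the fundamental class vanishes, equivalently if and only if the constant $0$-chain is a boundary. More usefully, non-amenability should be packaged as a \emph{uniform} filling/averaging statement: there is a bounded operator that spreads any bounded $0$-chain thinly over the space. I would first isolate precisely which form of this I need.

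First I would treat the case $k=1$ by hand to understand the mechanism, then set up the general argument. Let $z = \sum_{\overline x} a_{\overline x}\,\overline x$ be a uniformly finite $k$-cycle with $\sup_{\overline x}|a_{\overline x}| = C$; I want, for each $\epsilon>0$, a uniformly finite $(k+1)$-chain $w$ with $\partial w = 0$ in homology (i.e.\ modifying $z$ within its class) so that $\supn[norm]{z+\partial w}<\epsilon$. The intended trick, exploiting non-amenability, is a ``dilution'' or ``spreading'' construction: because $X$ is non-amenable there is a uniform way to split each simplex's coefficient into many copies displaced over an expanding region, so that each individual coefficient becomes small while the total cycle is unchanged up to a boundary. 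Concretely I would look for a chain homotopy, built from a uniformly proper ``averaging'' map $X\to X$ supported on balls, that replaces $z$ by $\tfrac1N$ times a sum of $N$ translated copies of $z$; since these copies land on a non-amenable (hence sufficiently spread-out) family of simplices, the coefficients drop by the factor $\tfrac1N$ while the homology class is preserved. Taking $N$ large drives $\supn[norm]{\cdot}$ below $\epsilon$.

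The honest way to realize ``averaging over $N$ copies up to a boundary'' is to produce a genuine uniformly finite $(k+1)$-chain. Here I would use that on a non-amenable UDBG space there exists a bounded-coefficient $1$-chain (a ``flow'') whose boundary is the constant chain $\sum_x x$; more generally, non-amenability yields, for the chain complex in each degree, a bounded contracting homotopy $h$ with $\partial h + h\partial = \mathrm{id}$ on the relevant subcomplex of bounded chains, with a norm control $\|h\|$ bounded independently of the chain. Given such an $h$ with the crucial feature that $h$ can be taken with arbitrarily small ``local multiplicity'' (it distributes mass over arbitrarily large regions), applying $h\partial = \mathrm{id}$ to the cycle $z$ gives $z = \partial(hz)$, exhibiting $z$ as a boundary, and then the semi-norm of the zero class is computed by optimizing over representatives, which the spreading allows to be made small. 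So the reduction is: (i) $z$ is null-homologous because $\huf_k(X;\R)$ is controlled by $\huf_0$ via non-amenability, and (ii) the infimal $\ell^\infty$-norm over the (nonempty) set of fillings is $0$.

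The main obstacle I anticipate is step (ii): merely knowing $z$ is a boundary is not enough, since $\supn[norm]{z}$ is the infimum over \emph{all} representatives of the class, and for the zero class the infimum over boundaries being $0$ must be proved, not assumed. The delicate point is to construct fillings whose coefficients are uniformly small while staying uniformly finite; this is exactly where a quantitative, norm-controlled version of non-amenability is essential, and where a naive filling (with bounded but not small coefficients) fails. I would therefore spend most of the effort establishing the quantitative spreading lemma: on a non-amenable UDBG space one can, for every $\epsilon>0$, write any uniformly finite cycle as a boundary of a uniformly finite chain of $\ell^\infty$-norm at most $\epsilon$. Once that lemma is in place, the proposition follows immediately for every degree $k\in\N$, including $k=0$, since the argument never used $k\ge 1$ beyond the existence of the filling.
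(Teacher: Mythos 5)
Your guiding intuition---dilute a cycle by averaging it over disjoint, uniformly-close-to-identity copies of itself---is exactly the mechanism of the paper's proof, but the way you ultimately package it is false. Both your claimed ``bounded contracting homotopy $h$ with $\partial h + h\partial = \id$ in every degree'' and your final ``quantitative spreading lemma'' (every uniformly finite cycle is the boundary of a uniformly finite chain of $\ell^\infty$-norm at most $\epsilon$) assert far more than the proposition: they assert $\huf_k(X;\R)=0$ for all $k$, i.e.\ vanishing of the homology groups themselves. That is wrong for non-amenable spaces in general. For example, for the free group $F_2=\langle a,b\rangle$ one has $\huf_1(F_2;\R)\cong H_1\bigl(F_2;\ell^\infty(F_2,\R)\bigr)$ (Proposition~\ref{prop:grouptwisted}), and since $F_2$ is free of rank~$2$ this is the kernel of the map $\ell^\infty(F_2,\R)^2\longrightarrow\ell^\infty(F_2,\R)$, $(\varphi_1,\varphi_2)\longmapsto (a-1)\cdot\varphi_1+(b-1)\cdot\varphi_2$; any non-zero bounded function that is constant on the cosets $\langle a\rangle g$ gives a non-zero element $(\varphi,0)$ of this kernel. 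So $\huf_1(F_2;\R)\neq 0$: non-amenability kills uniformly finite homology only in degree~$0$ (Theorem~\ref{thm:charamenability}), while in higher degrees only the \emph{semi-norm} is trivial. Your step~(i), that every $k$-cycle is null-homologous because ``$\huf_k$ is controlled by $\huf_0$ via non-amenability,'' is precisely the gap: no such control exists, and the whole argument has to run inside a fixed, possibly non-zero, homology class.

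The missing ingredient that makes the dilution rigorous \emph{within a class} is Whyte's rigidity theorem (Theorem~\ref{thm:whyterigidity}, in the form of Corollary~\ref{cor:rignonamenable}). The paper applies it to the double $Y=X\times\{0,1\}$: the projection $p\colon Y\longrightarrow X$ is a quasi-isometry between non-amenable UDBG spaces, hence uniformly close to a bilipschitz equivalence, so $\huf_k(p;\R)$ is an \emph{isometric} isomorphism (Proposition~\ref{prop:bilipisom}). For $\alpha\in\huf_k(X;\R)$ the class $\beta=\frac12\cdot\alpha\times 0+\frac12\cdot\alpha\times 1$ satisfies $\huf_k(p;\R)(\beta)=\alpha$ and $\supn{\beta}\leq\frac12\cdot\supn{\alpha}$, whence $\supn{\alpha}=\supn{\beta}\leq\frac12\cdot\supn{\alpha}$, forcing $\supn{\alpha}=0$ because the semi-norm is finite. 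In your language: the bilipschitz equivalence $Y\longrightarrow X$ provides two injections $X\longrightarrow X$, uniformly close to the identity and with disjoint images; pushing a cycle $z$ forward along them yields two disjointly supported cycles, each homologous to $z$, whose average halves the $\ell^\infty$-norm while staying in the class $[z]$. That is the true quantitative consequence of non-amenability; the versions you reduce to (a contracting homotopy, or small fillings of arbitrary cycles) are false.
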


\begin{prop}[vanishing: amenable case]\label{prop:vanishingamenable}
  Let $G$ be a finitely generated amenable group and let $k \in \N_{>0}$. Then 
  the $\ell^\infty$-semi-norm on~$\huf_k(G;\R)$ is trivial.
\end{prop}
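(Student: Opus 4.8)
The plan is to show that every class in $\huf_k(G;\R)$ with $k\ge 1$ admits representatives of arbitrarily small $\ell^\infty$-norm, produced by \emph{diluting} a given cycle over a F\o lner set. Amenability enters exactly through the F\o lner condition, and the restriction $k>0$ reflects that in degree $0$ the fundamental class is protected by an invariant mean: as recalled in the introduction it has semi-norm $1$, so it cannot be diluted.

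First I would record the two structural inputs. (i) The right translations $R_g\colon G\to G$, $x\mapsto xg$, satisfy $d(x,R_g(x))=|g|$, so they are uniformly close to the identity and hence induce $\id$ on $\huf_*(G;\R)$; moreover the chain homotopy may be taken as an explicit prism operator $h_g$ with $\partial h_g+h_g\partial=(R_g)_*-\id$, whose $\ell^\infty$-norm and propagation are controlled in terms of a word representing $g$. (ii) The naive move of averaging the \emph{cycle} does nothing useful: $\tfrac1{|F|}\sum_{g\in F}(R_g)_*z$ is again homologous to $z$ but has the same $\ell^\infty$-norm, being an average of isometric translates. The point is instead to \emph{spread the mass} so that near-disjoint translated copies each carry a coefficient of size $\approx\|z\|_\infty/|F|$.

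The main construction is therefore the following F\o lner dilution. Fix a finite generating set $S$. Given a $k$-cycle $z$ and a F\o lner set $F$, I would assemble a $(k+1)$-chain $w_F$ from the prisms $h_g$, $g\in F$, built on the stretched simplices $(x_0g,\dots,x_kg)$ and normalised by $1/|F|$, so that
\[
  z-\partial w_F = z_F
  \qquad\text{with}\qquad
  \|z_F\|_\infty \le \frac{|\partial_S F|}{|F|}\,\|z\|_\infty + o(1)\cdot\|z\|_\infty .
\]
The mechanism is visible already on $\Z$: the cycle $\sum_n(n,n+j)$ represents $j\cdot\bigl[\sum_n(n,n+1)\bigr]$ (the long edges subdivide into unit edges, filled by $-\sum_{i=2}^{j}\sum_n(n,n+i-1,n+i)$) yet has unchanged $\ell^\infty$-norm $1$; dividing by $j$ gives representatives of norm $1/j$. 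In general the interior translates telescope and cancel inside $F$, and what survives in the sup-norm is governed by the F\o lner boundary. Letting $F$ run through a F\o lner sequence then forces $\|z_F\|_\infty\to 0$, so $[z]$ has $\ell^\infty$-semi-norm $0$.

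I expect the crux to be the third step: arranging the prisms so that the surviving error is measured by the \emph{F\o lner ratio} $|\partial_S F|/|F|$ in the \emph{sup}-norm, rather than by the average word length over $F$ (which blows up) or by an $\ell^1$-count of overlaps. This demands that the stretched copies be genuinely near-disjoint, so that at each simplex only $O(|\partial_S F|)$ of the $|F|$ contributions are nonzero; verifying this near-disjointness together with the cancellation of the interior terms is the technical heart, and it is precisely what breaks down in degree $0$, where every finite-propagation $1$-chain $b$ satisfies $m(\partial b)=0$ for the invariant mean $m$, forcing every representative of the fundamental class to have norm $\ge 1$. As a consistency check and an alternative route, I would note the dual picture: the $\ell^\infty$-semi-norm is detected by invariant bounded functionals, and for amenable $G$ these vanish in positive degrees by the same averaging argument that kills bounded cohomology above degree $0$.
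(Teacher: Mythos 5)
Your proposal has a genuine gap at exactly the place you call ``the technical heart'': the diluting construction is never actually defined, and the ingredients you name cannot produce it. The ``stretched simplices'' $(x_0g,\dots,x_kg)$ are nothing but the right-translates of the original simplices, and right translation moves every vertex by at most $|g|_S$; hence for a cycle $z$ whose support is coarsely dense in $G$ (the relevant case, e.g.\ $z=\sum_{g\in G}(g,gs)$ for a generator $s$), the $|F|$ copies $(R_g)_*z$, $g\in F$, overlap essentially everywhere instead of being near-disjoint. The prisms $h_g$ only witness that $(R_g)_*z$ is homologous to $z$; they cannot create disjointness, so what your $w_F$ yields is precisely the smoothed average $\frac1{|F|}\sum_{g\in F}(R_g)_*z$ that you yourself discard in observation~(ii), and the claimed bound $\|z_F\|_\infty\leq\frac{|\partial_S F|}{|F|}\cdot\|z\|_\infty+o(1)\cdot\|z\|_\infty$ has no derivation; indeed, if it followed formally from the F\o lner condition it would apply equally in degree~$0$ and contradict $\supn{\fclr G}=1$ (Proposition~\ref{prop:fclsemi-norm}). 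What actually makes your $\Z$-example work is not translation at all but the stretching maps $m\mapsto nm+r$: quasi-isometric embeddings onto the sparse cosets $n\Z+r$, whose images for $r=0,\dots,n-1$ are pairwise disjoint and partition $\Z$, combined with the subdivision relation for long edges. This uses the finite-index subgroup $n\Z\leq\Z$, and a general finitely generated amenable group has no analogue of it (there exist infinite finitely generated simple amenable groups, which have no proper finite-index subgroups at all); no argument is offered for why positive degree, as opposed to degree~$0$, permits the spreading.

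For comparison, the paper's proof is entirely different and avoids all chain-level geometry: it identifies $\huf_k(G;\R)\cong H_k(G;\ell^\infty(G;\R))$ isometrically (Proposition~\ref{prop:grouptwistednorm}), observes the elementary inequality $\supn{\,\cdot\,}\leq\|\cdot\|_1$ between the $\ell^\infty$- and $\ell^1$-semi-norms on this group homology, and then uses that for amenable $G$ and $k>0$ the $\ell^1$-semi-norm vanishes, since the comparison map $H_k(G;\ell^\infty(G;\R))\longrightarrow H_k^{\ell^1}(G;\ell^\infty(G;\R))$ is isometric for $\|\cdot\|_1$ and $\ell^1$-homology of amenable groups is trivial. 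Your closing ``dual picture'' gestures toward this but is not correct as stated: invariant bounded functionals (bounded cohomology classes) detect the $\ell^1$-semi-norm, not the $\ell^\infty$-semi-norm; the latter is only \emph{bounded above} by $\|\cdot\|_1$, which is exactly why the $\ell^1$-vanishing still suffices. If you want to salvage your write-up, making that last remark precise along these lines is the quickest route, rather than pursuing the F\o lner dilution.
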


In particular, the corresponding reduced uniformly finite homology
with $\R$-coefficients is trivial for all non-amenable UDBG spaces and
trivial in higher degrees for finitely generated amenable
groups. Moreover, the vanishing in higher degrees shows that the
$\ell^\infty$-semi-norm on uniformly finite homology is not suitable
for Engel's approach to the rough Novikov conjecture~\cite{engel}.

\subsection*{Organisation of this article}
We briefly review uniformly finite homology for UDBG spaces and its
basic properties in Appendix~\ref{appx:uf}. The $\ell^\infty$-semi-norm
on uniformly finite homology is introduced in
Section~\ref{sec:ufsup}. The rigidity result
Theorem~\ref{thm:rigidity} is proved in
Section~\ref{sec:rigidity}. The vanishing in higher degrees
(Proposition~\ref{prop:vanishingnonamenable} and
\ref{prop:vanishingamenable}) is treated in
Section~\ref{sec:vanishing}.

\section{The $\ell^\infty$-semi-norm on uniformly finite homology}\label{sec:ufsup}

We now introduce the $\ell^\infty$-semi-norm on uniformly finite homology. We start with UDBG spaces in Section~\ref{subsec:snudbg} and then discuss the case of finitely generated groups in Section~\ref{subsec:sngroups}.

Let $R$ be a ring with unit endowed with a norm $|\cdot|$ (in the
sense of Definition~\ref{defi:coefficientnorm}) and let $A$ be an
$R$-module. A \emph{norm on $A$} is a function
\[\|\cdot\|\colon A\longrightarrow\R_{\geq 0}\] satisfying the following conditions:
\begin{enumerate}
\item For all $a\in A$ we have $\|a\|=0$ if and only if $a=0$.
\item For all $a,a'\in A$ we have $\|a+a'\|\leq\|a\|+\|a'\|$.
\item For all $a\in A$, $r\in R$ we have $\|r\cdot a\|= |r|\cdot\|a\|$.
\end{enumerate}
A function $\|\cdot\|\colon A\longrightarrow\R_{\geq 0}$ is a \emph{semi-norm on $A$} if it satisfies condition (2) above and the following condition:
\begin{enumerate}
\item[(3')] For all $a\in A$, $r\in R$ we have $\|r\cdot a\|\leq |r|\cdot\|a\|$.
\end{enumerate}
\subsection{Semi-norm for UDBG spaces}\label{subsec:snudbg}

We define the $\ell^{\infty}$-norm on the module of uniformly finite chains with coefficients in a normed ring with unit. We, then, have a corresponding semi-norm on uniformly finite homology.
\begin{defi}($\ell^\infty$-semi-norm)
Let $R$ be a normed ring with unit. 
Let $X$ be a $UDBG$ space and let $n\in\N$. The \emph{$\ell^{\infty}$-norm} on $\cuf_n(X;R)$ is defined by
\begin{align*}
  \supn{\cdot} \colon \cuf_n(X;R) & \longrightarrow \R_{\geq 0} \\
  \sum_{x \in X^{n+1}} c_x \cdot x & \longmapsto \sup_{x \in X^{n+1}} |c_x|.
\end{align*}
The \emph{$\ell^{\infty}$-semi-norm} on $\huf_n(X;R)$ is the corresponding semi-norm induced on the quotient. 
More explicitly, for every~$\alpha \in \huf_n(X;R)$, it is defined by
\[
\supn{\alpha}:=\inf\bigl\{\supn{c} \bigm| c\in \cuf_n(X;R),\ \partial(c)=0,\ \alpha=[c]\bigr\}. 
\]
\end{defi}

The $\ell^{\infty}$-semi-norm on uniformly finite homology with
$\Z$-coefficients is \emph{not} homogoneous, in general
(Example~\ref{exa:nonhomog}). Moreover, the $\ell^\infty$-semi-norm on
uniformly finite homology is \emph{not} a functorial semi-norm in the
sense of Gromov~\cite{gromov,loehffsnrep} (Example~\ref{exa:nonfsn}).

\begin{prop}\label{prop:bilipisom}
  Let $R$ be a normed ring with unit. If $f \colon X \longrightarrow Y$ is a quasi-isometry between UDBG
  spaces that is uniformly close to a bilipschitz equivalence, then for each $n\in\N$ the induced map $\huf_n(f;R) \colon \huf_n(X;R)
  \longrightarrow \huf_n(Y;R)$ is an isometric isomorphism with
  respect to the $\ell^\infty$-semi-norms.
\end{prop}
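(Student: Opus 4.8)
The plan is to reduce to the case that $f$ itself is a bilipschitz equivalence and then to observe that such a map is already an isometry on the level of uniformly finite chains. First I would use the standard fact (recalled in Appendix~\ref{appx:uf}) that uniformly close maps between UDBG spaces induce the \emph{same} map on uniformly finite homology. Writing $g \colon X \longrightarrow Y$ for a bilipschitz equivalence uniformly close to $f$, this gives $\huf_n(f;R) = \huf_n(g;R)$, so it suffices to prove the statement for the bilipschitz equivalence~$g$.

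Now $g$ is in particular a bijection, so the induced map $g^{n+1} \colon X^{n+1} \longrightarrow Y^{n+1}$ on tuples of vertices is again a bijection. Hence the induced chain map $\cuf_n(g;R)$ sends $\sum_{x} c_x \cdot x$ to $\sum_x c_x \cdot g^{n+1}(x)$; that is, it merely relabels the basis $X^{n+1}$ by the basis $Y^{n+1}$ along the bijection~$g^{n+1}$. One should check that this chain really lies in $\cuf_n(Y;R)$: the coefficients are unchanged, and the bilipschitz bound $\operatorname{diam}(g^{n+1}(x)) \le L \cdot \operatorname{diam}(x)$ (for a Lipschitz constant~$L$ of~$g$) preserves the bounded-diameter condition. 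Because the coefficients are only permuted, not combined, we obtain $\supn{\cuf_n(g;R)(c)} = \supn{c}$ for every chain~$c$, so $\cuf_n(g;R)$ is an isometric isomorphism of normed chain complexes, with isometric inverse $\cuf_n(g^{-1};R)$.

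It then remains to descend this chain-level isometry to homology. For $\alpha \in \huf_n(X;R)$ and any cycle $c$ representing~$\alpha$, the cycle $\cuf_n(g;R)(c)$ represents $\huf_n(g;R)(\alpha)$ and has the same $\ell^\infty$-norm; taking the infimum over all such~$c$ yields $\supn{\huf_n(g;R)(\alpha)} \le \supn{\alpha}$. Applying the same inequality to~$g^{-1}$ and the class $\huf_n(g;R)(\alpha)$, together with the functoriality identity $\huf_n(g^{-1};R) \circ \huf_n(g;R) = \id$, gives the reverse inequality, whence $\huf_n(g;R)$ is an isometric isomorphism.

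The steps are mostly formal, and I expect no serious obstacle beyond bookkeeping. The one point that needs care is the reduction in the first paragraph, namely the identification of uniformly close maps on uniformly finite homology, together with the verification that $g^{n+1}$ preserves the uniform finiteness conditions so that $\cuf_n(g;R)$ is genuinely a map of normed chain complexes. The essential mechanism is simply that a bijection on basis elements permutes coefficients without adding them together, which is exactly the feature to which the $\ell^\infty$-norm is insensitive.
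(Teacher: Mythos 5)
Your proposal is correct and follows essentially the same route as the paper's proof: reduce via the uniform-closeness invariance of Proposition~\ref{prop:qi-invariance} to a bilipschitz equivalence~$g$, observe that the induced bijection $X^{n+1} \longrightarrow Y^{n+1}$ makes $\cuf_n(g;R)$ a coefficient-permuting, hence $\ell^\infty$-norm-preserving, chain map, and then obtain the two inequalities on homology by applying the argument to $g$ and to~$g^{-1}$. The only difference is that you make explicit the (routine) verification that the image chain satisfies the uniform finiteness conditions, which the paper leaves implicit in Proposition~\ref{prop:qi-invariance}.
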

\begin{proof}
In view of Proposition~\ref{prop:qi-invariance} 
it suffices to show that any bilipschitz equivalence induces an isometry in uniformly finite homology. 
Let $n\in\N$. Because a bilipschitz equivalence $g\colon X\longrightarrow Y$ is a bijection, it induces a bijection
 \begin{align*}
 X^{n+1}&\longrightarrow Y^{n+1}\\
 (x_0,\dots,x_n) &\longmapsto \bigl(g(x_0),\dots,g(x_n)\bigr).
 \end{align*}
Thus, for all $c=\sum_{\ol{x}\in X^{n+1}}c_{\ol{x}}\cdot\ol{x}\in\cuf_n(X;R)$ we have
\[
\cuf_n(g;R)(c)=\sum_{\ol{y}\in Y^{n+1}}c_{g^{-1}(\ol{y})}\cdot\ol{y}.
\]
In particular, $\supn{\cuf_n(g;R)(c)}=\supn{c}$. Therefore, $\supn{\huf_n(g;R)(\alpha)} \leq \supn{\alpha}$ for 
all~$\alpha \in \huf_n(X;R)$. Applying the same argument to the inverse of~$g$ shows that $\supn{\huf_n(g;R)(\alpha)} = \supn\alpha$ holds for all~$\alpha \in \huf_n(X;R)$.
\end{proof}

The same definition of $\ell^{\infty}$-semi-norm can be considered on uniformly finite homology for general metric spaces~\cite{blockweinberger}. Notice that for metric spaces without isolated points, the $\ell^{\infty}$-semi-norm is trivial in any degree in uniformly finite homology~\cite[Proposition 4.2.3]{dianathesis}. In particular, the corresponding reduced uniformly finite homology vanishes in any degree.

\subsection{Semi-norm for groups}\label{subsec:sngroups}

Proposition~\ref{prop:bilipisom} yields a well-defined
semi-norm on uniformly finite homology of finitely generated groups:

\begin{cor}[$\ell^\infty$-semi-norm on uniformly finite homology of groups]
  Let $R$ be a normed ring with unit. If $G$ is a finitely generated group and $S, T \subset G$ are finite
  generating sets of~$G$, then the identity map~$\id_G \colon (G,d_S)
  \longrightarrow (G,d_T)$ is a bilipschitz equivalence with respect
  to the word metrics on~$G$ associated with~$S$ and~$T$. In
  particular, the induced map~$\huf_*(G,d_S;R) \longrightarrow
  \huf_*(G,d_T;R)$ is an isometric isomorphism with respect to the
$\ell^\infty$-semi-norm. Hence, the
  $\ell^\infty$-semi-norm on~$\huf_*(G;R)$ is
  independent of the chosen generating set.
\end{cor}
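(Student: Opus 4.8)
The plan is to prove the three assertions in order, the only substantive step being the first, elementary claim; the remaining two then follow formally from Proposition~\ref{prop:bilipisom}.

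First I would check that $\id_G \colon (G,d_S) \longrightarrow (G,d_T)$ is a bilipschitz equivalence. Since $S$ and $T$ are finite generating sets, every element of $S$ is a word in $T \cup T^{-1}$ and vice versa, so the quantities $L := \max_{s \in S} d_T(e,s)$ and $L' := \max_{t \in T} d_S(e,t)$ are finite. Using left-invariance of both word metrics, this yields $d_T(g,h) \le L \cdot d_S(g,h)$ and $d_S(g,h) \le L' \cdot d_T(g,h)$ for all $g,h \in G$; hence $\id_G$ is a bilipschitz equivalence with constant $\max\{L,L'\}$.

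Next I would observe that $(G,d_S)$ and $(G,d_T)$ are UDBG spaces: the word metric is integer-valued, so distinct points have distance at least $1$, and by homogeneity every ball of a fixed radius has the same, uniformly bounded, cardinality. Thus the hypotheses of Proposition~\ref{prop:bilipisom} are met. A bilipschitz equivalence is in particular a quasi-isometry that is (trivially) uniformly close to a bilipschitz equivalence, namely itself; applying Proposition~\ref{prop:bilipisom} therefore shows that $\huf_*(\id_G;R)$ is an isometric isomorphism with respect to the $\ell^\infty$-semi-norms. Since $S$ and $T$ were arbitrary finite generating sets, this identification shows that the $\ell^\infty$-semi-norm on $\huf_*(G;R)$ is independent of the chosen generating set.

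I expect no real obstacle: the argument is essentially an unwinding of definitions, and the only place that genuinely uses a hypothesis is the finiteness of $S$ and $T$ when bounding the distortion constants $L$ and $L'$. The mild care needed is simply to confirm the UDBG conditions so that Proposition~\ref{prop:bilipisom} is applicable.
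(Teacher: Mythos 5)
Your proposal is correct and follows essentially the same route as the paper, which states the corollary as a direct consequence of Proposition~\ref{prop:bilipisom}: the identity is a bilipschitz equivalence between the two word metrics, hence induces an isometric isomorphism on~$\huf_*$. Your write-up merely makes explicit the standard distortion estimate for the constants $L, L'$ and the verification that word-metric spaces are UDBG, both of which the paper takes for granted.
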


\begin{defi}[$\ell^{\infty}$-semi-norm on group homology with twisted coefficients]
Let~$n\in\N$. Every chain~$c \in C_n(G;\ell^{\infty}(G,R))$ can be
written uniquely as a finite sum of the type~$\sum_{t\in
  G^n}(e,t_1,\dots,t_n)\otimes\varphi_{t}$, where almost all of
the~$\varphi_t \in \ell^{\infty}(G,R)$ are zero; we then  
define 
\[
\supn{c}:=\sup_{t \in G^n}\supn{\varphi_{t}},
\]
where for all~$t\in G^{n}$ the norm $\supn{\varphi_{t}}$ is the supremum norm on~$\ell^{\infty}(G,R)$. The \emph{$\ell^{\infty}$-semi-norm} on $H_n(G;\ell^{\infty}(G,R))$ is defined for all~$\alpha\in H_n(G;\ell^{\infty}(G,R))$ by
\[
\supn{\alpha}:=\inf\bigl\{\supn{c} \bigm| c\in C_n(G;\ell^{\infty}(G,R)),\ \partial(c)=0,\ \alpha=[c]\bigr\}. 
\]
\end{defi}
\begin{prop}\label{prop:grouptwistednorm}
Let $G$ be a group and $R$ be a normed ring with unit. The chain isomorphism~$\rho_*\colon \cuf_*(G;R)\longrightarrow C_*(G;\ell^{\infty}(G,R))$ given in Proposition~\ref{prop:grouptwisted} induces an isometric isomorphism $H_*(\rho_*)\colon\huf_*(G;R)\longrightarrow H_*(G;\ell^{\infty}(G,R))$.
\end{prop}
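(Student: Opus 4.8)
The plan is to verify that the chain isomorphism $\rho_*$ from Proposition~\ref{prop:grouptwisted} is not merely a chain isomorphism but an \emph{isometric} chain isomorphism, i.e.\ that $\supn{\rho_n(c)} = \supn{c}$ holds for every $c \in \cuf_n(G;R)$ and every $n \in \N$. Once this is established, the passage to homology is formal and essentially identical to the argument in Proposition~\ref{prop:bilipisom}: if $\rho_*$ is norm-preserving on the chain level, then for any homology class $\alpha$ it maps the set of cycles representing $\alpha$ bijectively onto the set of cycles representing $H_*(\rho_*)(\alpha)$ with the same $\ell^\infty$-norms, so the infima defining the two semi-norms coincide and $H_*(\rho_*)$ is an isometry.

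The core of the argument is therefore the chain-level computation. First I would write down explicitly how $\rho_n$ identifies a uniformly finite chain $c = \sum_{\ol{x} \in G^{n+1}} c_{\ol{x}} \cdot \ol{x}$ with an element of $C_n(G;\ell^\infty(G,R))$. The standard identification exploits the free left $G$-action on $G^{n+1}$: every tuple $(x_0, x_1, \dots, x_n)$ can be written uniquely as $x_0 \cdot (e, t_1, \dots, t_n)$ where $t_i = x_0^{-1} x_i$, so a uniformly finite chain is repackaged as a finite sum $\sum_{t \in G^n} (e, t_1, \dots, t_n) \otimes \varphi_t$ where the coefficient function $\varphi_t \in \ell^\infty(G,R)$ records $\varphi_t(g) = c_{g \cdot (e,t_1,\dots,t_n)}$. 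The key point is that this is just a reindexing of the same coefficients: the full collection $\{ c_{\ol{x}} \mid \ol{x} \in G^{n+1}\}$ is exactly the same as the collection $\{ \varphi_t(g) \mid t \in G^n,\ g \in G\}$, merely partitioned by $G$-orbits.

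Given this, the norm identity is immediate: $\supn{\rho_n(c)} = \sup_{t \in G^n} \supn{\varphi_t} = \sup_{t \in G^n} \sup_{g \in G} |\varphi_t(g)| = \sup_{t,g} |c_{g\cdot(e,t_1,\dots,t_n)}| = \sup_{\ol{x} \in G^{n+1}} |c_{\ol{x}}| = \supn{c}$, where the crucial middle step uses that the orbit map $G \times G^n \to G^{n+1}$ is a bijection so no coefficient is counted twice or omitted. I would carry this out by checking that the supremum norm on $\ell^\infty(G,R)$ built into the definition of $\supn{c}$ for twisted chains is precisely the per-orbit supremum, and that taking the outer supremum over $t \in G^n$ reassembles the global supremum over $G^{n+1}$.

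I do not expect a genuine obstacle here, since $\rho_*$ is already known to be a chain isomorphism from Proposition~\ref{prop:grouptwisted}; the only substantive content is matching the two norm conventions. The one point requiring care is that the twisted-coefficient norm is defined via the \emph{unique} expression of a chain in the basis $(e,t_1,\dots,t_n)$, so I would make sure the identification $\rho_n$ really does send the uniformly finite chain to this normal form and not to some other representative. Verifying that the free $G$-action guarantees uniqueness of the orbit decomposition — and hence that $\rho_n$ is well defined and norm-preserving rather than merely norm-nonincreasing — is the detail on which the isometry statement rests.
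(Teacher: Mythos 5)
Your proposal is correct and follows essentially the same route as the paper: both establish the chain-level identity $\supn{\rho_n(c)} = \supn{c}$ by observing that $\rho_n$ merely reindexes the coefficients of $c$ along the orbit decomposition of $G^{n+1}$ under the free $G$-action, and then pass to homology formally using that $\rho_*$ is a chain isomorphism. The only (immaterial) discrepancy is your convention $\varphi_t(g) = c_{g\cdot(e,t_1,\dots,t_n)}$ versus the paper's $\varphi_{c,t}(g) = c_{g^{-1}\cdot(e,t_1,\dots,t_n)}$, which makes no difference to the suprema since $g \mapsto g^{-1}$ is a bijection of $G$.
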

\begin{proof}
We prove that the  
chain isomorphism~$\rho_*$ is isometric in every degree: let~$n\in\N$ and let $c=\sum_{\ol{g}\in G^{n+1}}c_{\ol{g}}\cdot\ol{g}\in \cuf_n(G;R)$. Then with the notation from Proposition~\ref{prop:grouptwisted} we obtain
\begin{align*}
\supn{\rho_n(c)} 
& = \supn[bigg]{\sum_{t\in G^n}(e,t_1,\dots,t_n)\otimes\varphi_{c,t}}\\
&=\sup_{t\in G^n}\supn{\varphi_{c,t}}=\sup_{t\in G^n}\sup_{g\in G}|\varphi_{c,t}(g)|
=\sup_{t\in G^n}\sup_{g\in G}|c_{g^{-1}\cdot (e,t_1,\dots,t_n)}|\\ 
& = \supn{c}.
\end{align*}
Because $\rho_*$ is a chain isomorphism, it follows that for all~$\alpha\in\huf_*(G;\R)$ we have $\supn{H_*(\rho_*)(\alpha)} = \supn{\alpha}$. 
\end{proof}

For finitely generated groups, Definition~\ref{defi:amenableUDBG} gives
a notion of amenability via F\o lner sequences. Alternatively, amenable groups can be
characterised through the existence of invariant means~\cite[Theorem~4.9.2]{ceccherini}. Let~$G$ be a finitely generated amenable group and let $M(G)$ denote the set of all invariant means~$\ell^{\infty}(G;\R) \longrightarrow \R$ on~$G$. Then every mean~$m \in M(G)$ induces a transfer map~\cite[Proposition 2.15]{attie}
\[ m_* = H_*(\id_G;m) \colon \huf_*(G;\R) \cong H_*\bigl(G;\ell^\infty(G;\R)\bigr)
  \longrightarrow H_*(G;\R).
\]
In particular, in degree~$0$, we obtain a map~$m_0 \colon \huf_0(G;\R)
\longrightarrow \R$ mapping~$\fclr G$ to~$1$ with $\|m_0\|\leq 1$.

Invariant means give an alternative description of classes with trivial \mbox{$\ell^{\infty}$-se}\-mi-norm in uniformly finite homology in degree~$0$~\cite[Theorem 1]{marcinkowski}\cite[Corollary 6.7.5]{blank}:
\begin{prop}[mean-invisibility and $\ell^\infty$-semi-norm in degree~$0$]
Let $G$ be a finitely generated amenable group and let $\alpha\in\huf_0(G;\R)$. Then $\supn{\alpha}=0$ if and only if $\alpha$ is mean-invisible.
Here, a class $\alpha\in\huf_0(G;\R)$ is \emph{mean-invisible}~\cite[Definition~3.5]{blankdiana} if
\[
\fa{m\in M(G)} m_0(\alpha)=0\in\R.
\]
\end{prop}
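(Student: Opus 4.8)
The plan is to prove the two implications separately, since we have two distinct descriptions of the classes involved: the analytic condition $\supn{\alpha}=0$ (via the defining infimum over cycles) and the mean-theoretic condition of mean-invisibility (via the transfer maps~$m_0$). I would first set up the identification $\huf_0(G;\R)\cong H_0(G;\ell^\infty(G;\R))$ from Proposition~\ref{prop:grouptwistednorm}, which is isometric, so that the $\ell^\infty$-semi-norm computations can be carried out on whichever side is more convenient. The crucial inputs are that every~$m\in M(G)$ induces a transfer map~$m_0\colon\huf_0(G;\R)\longrightarrow\R$ with operator norm~$\|m_0\|\leq 1$, and that these transfer maps are natural with respect to the chain-level description.

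For the forward direction, suppose $\supn{\alpha}=0$. Fix any~$m\in M(G)$. I would use $\|m_0\|\leq 1$ together with the defining infimum: for every~$\varepsilon>0$ there is a cycle~$c$ representing~$\alpha$ with $\supn{c}<\varepsilon$, and then
\[
|m_0(\alpha)| = |m_0([c])| \leq \|m_0\|\cdot\supn{[c]} \leq \supn{c} < \varepsilon.
\]
Since~$\varepsilon>0$ was arbitrary, $m_0(\alpha)=0$, and since~$m$ was arbitrary this shows~$\alpha$ is mean-invisible. This direction is essentially formal, once one knows the norm bound on the transfer map.

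The harder direction is the converse: assuming~$\alpha$ is mean-invisible, I must produce cycles representing~$\alpha$ of arbitrarily small $\ell^\infty$-norm. The natural strategy is to describe the subspace of mean-invisible classes explicitly and identify it with the $\ell^\infty$-closure of the boundaries, i.e.\ to show that the mean-invisible classes are precisely those whose representing cycles lie in the $\supn{\cdot}$-closure of~$\partial(\cuf_1(G;\R))$ inside the space of $0$-cycles. Concretely, in degree~$0$ a $0$-chain is a function~$\varphi\in\ell^\infty(G,\R)$ (every $0$-chain is automatically a cycle), its class in~$\huf_0(G;\R)$ is detected by the values~$m(\varphi)$ over all invariant means, and one knows that~$\varphi$ represents~$0$ with small norm exactly when~$\varphi$ can be approximated in the supremum norm by elements of the image of the boundary map. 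The main obstacle is thus a functional-analytic separation argument: I expect to invoke a Hahn--Banach type duality between the space of $0$-chains modulo the $\ell^\infty$-closure of boundaries on one side, and the space~$M(G)$ of invariant means (or their linear span) on the other, showing that a chain has semi-norm~$0$ if and only if it is annihilated by every invariant mean. This is exactly the content cited from~\cite[Theorem 1]{marcinkowski} and~\cite[Corollary 6.7.5]{blank}, so I would assemble the argument by translating the $\ell^\infty$-semi-norm vanishing into this duality statement and then quoting the identification of the annihilator of the boundary-closure with the invariant means.
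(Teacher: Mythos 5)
Your proposal is correct and is in line with the paper's own treatment: the paper gives no proof of this proposition at all, attributing it entirely to \cite[Theorem~1]{marcinkowski} and \cite[Corollary~6.7.5]{blank}, which is precisely where you locate the essential content --- the identification, via Hahn--Banach separation, of the classes annihilated by all invariant means with those whose representatives lie in the $\ell^\infty$-closure of the boundaries. Your formal argument for the forward implication (using $\|m_0\|\leq 1$ together with the infimum definition of the semi-norm, so that $|m_0(\alpha)|\leq\supn{\alpha}=0$) is also correct and is the only part that does not need the cited duality.
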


\section{Degree~$0$: Rigidity}\label{sec:rigidity}

We will prove Theorem~\ref{thm:rigidity} in
Section~\ref{subsec:Zrigidity}. The basic idea is to identify the
fundamental class as the ``largest'' class in~$\huf_0(\args;\Z)$ of
$\ell^\infty$-semi-norm~$1$ (Proposition~\ref{prop:fclbig}), and then
to apply Whyte's rigidity result Theorem~\ref{thm:whyterigidity}. In
contrast, we will show in Section~\ref{subsec:Rnorigidity} that the
analogue of Theorem~\ref{thm:rigidity} for $\R$-coefficients does
\emph{not} hold. In Section~\ref{subsec:grouphoms}, we will translate
isometry properties of group homomorphisms into the context of
$\ell^\infty$-semi-norms on uniformly finite homology.

\subsection{Integral coefficients}\label{subsec:Zrigidity}

As a preparation for the proof of Theorem~\ref{thm:rigidity}, we look
at those classes in uniformly finite homology that can be represented
via non-negative coefficients:

\begin{defi}[non-negative classes]
  Let $X$ be a UDBG-space. We then define
  \[ \cufp_0(X;\Z) := \biggl\{
     \sum_{x \in X} c_x \cdot x \in \cuf_0(X;\Z)
     \biggm|
     \fa{x \in X} c_x \geq 0
     \biggr\}
  \]
  and we write~$\hufp_0(X;\Z) \subset \huf_0(X;\Z)$ for the subset 
  of classes that admit a representing cycle in~$\cufp_0(X;\Z)$.
\end{defi}

\begin{rem}\label{rem:hufpmaps}
  Let $f \colon X \longrightarrow Y$ be a quasi-isometry between
  UDBG-spaces. By definition of non-negative chains and the induced
  map~$\cuf_0(f;\Z)$ we obtain~$\cuf_0(f;\Z) (\cufp_0(X;\Z)) \subset
  \cufp_0(Y;\Z)$; in particular, it follows that~$\huf_0(f;\Z) (\hufp_0(X;\Z)) \subset
  \hufp_0(Y;\Z)$. Looking at a quasi-inverse of~$f$ shows that
  \[ \huf_0(f;\Z) \bigl(\hufp_0(X;\Z)\bigr) = \hufp_0(Y;\Z).
  \]
\end{rem}

\begin{prop}[the fundamental class is large]\label{prop:fclbig}
  Let $X$ be a UDBG space. 
  \begin{enumerate}
    \item Let~$\alpha \in \huf_0(X;\Z)$ with $\supn \alpha \leq 1$ and
      $\alpha \neq \fclz X$. Then there exists an element~$\beta \in
      \hufp_0(X;\Z)\setminus\{0\}$ with~$\supn{\alpha + \beta}\leq 1$.
    \item For all~$\beta \in \hufp_0(X;\Z) \setminus \{0\}$ we 
      have~$\supn{\fclz X + \beta} > 1$.
  \end{enumerate}
\end{prop}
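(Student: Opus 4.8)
The plan is to reduce everything to integral combinatorics. Throughout, write $u := \sum_{x \in X} x \in \cuf_0(X;\Z)$ for the canonical cycle representing~$\fclz X$, and for $B \subseteq X$ let $\mathbf 1_B := \sum_{x \in B} x$. Two observations are used repeatedly. First, since coefficients are integers and chains are uniformly finite, the $\ell^\infty$-norm of an integral cycle lies in~$\Z_{\ge 0}$, so $\supn{\alpha}\in\Z_{\ge 0}$ is a \emph{minimum}; in particular $\supn{\alpha}\le 1$ is equivalent to the existence of a representing cycle with all coefficients in~$\{-1,0,1\}$, and $\supn{\alpha}>1$ is equivalent to $\supn{\alpha}\ge 2$. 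Second, I will use the routing description of null-homologous $0$-chains from Appendix~\ref{appx:uf}.

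For part~(1) I would begin with a representative $a=\sum_x a_x\,x$ of~$\alpha$ with $a_x\in\{-1,0,1\}$ and split $X$ by the sign of the coefficient: $P:=\{a_x=1\}$, $Z:=\{a_x=0\}$, $N:=\{a_x=-1\}$. Then $a=u-g$ with $g:=\mathbf 1_Z+2\cdot\mathbf 1_N\in\cufp_0(X;\Z)$ supported on $A:=Z\cup N$, so $\fclz X-\alpha=[g]$. As $\alpha\ne\fclz X$, we get $[g]=[\mathbf 1_Z]+2\cdot[\mathbf 1_N]\ne 0$, hence $[\mathbf 1_Z]\ne 0$ or $[\mathbf 1_N]\ne 0$; I would let $\beta$ be whichever of these is non-zero. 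Both classes lie in $\hufp_0(X;\Z)$, and adding the corresponding indicator cycle only raises coefficients $0\mapsto 1$ on~$Z$ (resp.\ $-1\mapsto 0$ on~$N$), so $a+\mathbf 1_Z$ (resp.\ $a+\mathbf 1_N$) still has coefficients in~$\{-1,0,1\}$. Thus $\beta\in\hufp_0(X;\Z)\setminus\{0\}$ with $\supn{\alpha+\beta}\le 1$. The point worth stressing is that having the \emph{two} candidate subsets $Z$ and $N$ is what makes the argument go through: an arbitrary infinite subset of~$A$ may carry the zero class, but $Z$ and $N$ cannot both do so.

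For part~(2) I would argue by contradiction, again reformulating $\supn{\fclz X+\beta}>1$ as $\ge 2$. Represent $\fclz X+\beta$ by $u+b$ with $b\in\cufp_0(X;\Z)$ a cycle for~$\beta$, and suppose there were a cycle~$c$ with coefficients in~$\{-1,0,1\}$ and $[c]=\fclz X+\beta$. Set $n:=u+b-c$. Then $n$ is null-homologous (as $[u+b]=[c]$), non-negative (each coefficient is $\ge 1+0-1=0$), and dominates~$b$, since $n-b=u-c$ has all coefficients $\ge 0$. The contradiction would then follow from the following key lemma: if $p,q\in\cuf_0(X;\Z)$ satisfy $0\le q\le p$ coefficientwise and $p$ is null-homologous, then $q$ is null-homologous. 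Applying it with $q=b\le p=n$ yields $\beta=[b]=0$, against the hypothesis.

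The key lemma is where the real work lies, and I expect it to be the main obstacle. The intended mechanism is a routing/sub-selection argument: pick a uniformly finite integral $1$-chain~$d$ with $\partial d=p$; since $p\ge 0$, the flow~$d$ has only sources, so after discarding cycles it decomposes into oriented paths escaping to infinity, with exactly $p_x$ path-starts at each~$x$. Selecting $q_x\le p_x$ of these starts at every point produces a sub-flow~$d_q$ with $\partial d_q=q$, coefficients bounded by those of~$d$, and no larger propagation; hence $q=\partial d_q$ is null-homologous. The delicate step is making the decomposition of an \emph{infinite} bounded integral flow into paths-to-infinity rigorous, which I would handle by an exhaustion of~$X$ together with a K\"onig-type compactness argument (bounded geometry makes $X$ locally finite), or by invoking the transport/max-flow results of Block--Weinberger and Whyte. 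I would also flag explicitly that density- or mean-based reasoning is insufficient here: there exist non-zero non-negative classes that are mean-invisible, so the hypothesis that $q$ is \emph{dominated by a null-homologous chain} is genuinely stronger than any statement about averages and cannot be relaxed.
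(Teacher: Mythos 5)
Your part~(1) is correct and is essentially the paper's argument with an extra, unnecessary case distinction: writing $a = u - g$ with $g = \mathbf{1}_Z + 2\cdot\mathbf{1}_N \in \cufp_0(X;\Z)$ as you do, the paper simply takes $\beta := [g] = \fclz X - \alpha$ itself, which lies in $\hufp_0(X;\Z)$, is non-zero because $\alpha \neq \fclz X$, and satisfies $\alpha + \beta = \fclz X$, whence $\supn{\alpha + \beta} \leq 1$. There is no need to split $[g] = [\mathbf{1}_Z] + 2\cdot[\mathbf{1}_N]$ and choose a non-zero summand (though that variant is also valid). Your preliminary observation that integrality makes the infimum a minimum is correct and is indeed used implicitly by the paper.

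In part~(2) your reduction is exactly the paper's: with $p := u + b - c \geq 0$ and $q := b$ everything rests on the monotonicity lemma ``$0 \leq q \leq p$ coefficientwise and $[p] = 0$ imply $[q] = 0$''. The lemma is true, but your proposed proof of it---decomposing an infinite integral flow into paths to infinity and selecting sub-flows---is the one genuinely incomplete step in your write-up, as you acknowledge yourself: sign-consistency of the decomposition, uniform finiteness and bounded propagation of the sub-flow, and the compactness argument are all left open. The point you are missing is that no such machinery is needed, because the lemma is an immediate consequence of Whyte's vanishing criterion, Theorem~\ref{thm:deg0}, stated in Appendix~\ref{appx:uf}: from $[p] = 0$ one gets constants~$C, r \in \N$ with
\[
C \cdot |\partial_r F| \;\geq\; \biggl|\sum_{x \in F} p_x\biggr| \;=\; \sum_{x \in F} p_x \;\geq\; \sum_{x \in F} q_x \;=\; \biggl|\sum_{x \in F} q_x\biggr|
\]
for all finite~$F \subset X$ (using $p_x \geq q_x \geq 0$), and the converse direction of the same criterion then yields $[q] = 0$. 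This two-line deduction is literally how the paper concludes, applied with $p_x = 1 + b_x - c_x$ and $q_x = b_x$. So your architecture matches the paper's; once you replace the sketched routing argument by this application of Theorem~\ref{thm:deg0}, the proof is complete.
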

\begin{proof}
  \emph{Ad~1.}  By assumption, there is a cycle~$c = \sum_{x \in X}
  c_x \cdot x \in \cuf_0(X;\Z)$ representing~$\alpha$ with
  $|c_x| \leq 1$ for all~$x\in X$. 
  We then consider
  \[ b := \sum_{x \in X} (1-c_x) \cdot x \in \cufp_0(X;\Z). 
  \]
  So~$\beta:= [b] = \fclz X - \alpha \neq 0 \in \hufp_0(X;\Z)$ and 
  $ \supn{\alpha + \beta} = \supn{\fclz X} \leq 1. 
  $

  \emph{Ad~2.}  \emph{Assume} for a contradiction that there is
  a~$\beta \in \hufp_0(X;\Z) \setminus\{0\}$ with~$\supn{\fclz X +
    \beta} \leq 1$.  Let $b = \sum_{x\in X} b_x \cdot x \in
  \cufp_0(X;\Z)$ be a non-negative cycle representing~$\beta$, and let
  $c = \sum_{x \in X} c_x \cdot x \in \cuf_0(X;\Z)$ be a cycle with
  $[c] = \fclz X + \beta$ and $|c_x| \leq 1$ for all~$x \in
  X$. Thus,
  \[ 0 = \fclz X + \beta - \bigl(\fclz X + \beta \bigr) 
       = \biggl[\sum_{x \in X} \bigl(1 + b_x - c_x\bigr) \cdot x\biggr].  
  \]
  In view of the vanishing criterion (Theorem~\ref{thm:deg0}) there 
  hence exist constants~$C,r \in \N$ satisfying
  \[ 
     \fa{F \subset X \text{ finite}}
     C \cdot |\partial_r F|
     \geq 
     \biggl|
     \sum_{x \in F} 1 + b_x - c_x
     \biggr|
     \geq 
     \biggl|
     \sum_{x \in F} b_x
     \biggr|.
  \]
  Therefore, the vanishing criterion implies that also~$\beta=[\sum_{x \in
      X} b_x \cdot x] = 0$ in~$\huf_0(X;\Z)$, contradicting our
  assumption on~$\beta$.
\end{proof}

\begin{proof}[Proof of Theorem~\ref{thm:rigidity}]
  The implication~``$(1) \Rightarrow (2)$'' is a consequence of
  Proposition~\ref{prop:bilipisom}, and ``$(2) \Rightarrow (3)$'' is
  trivial. We now prove~``$(3) \Rightarrow (1)$'':

  Suppose that $\huf_0(f;\Z) \colon \huf_0(X;\Z) \longrightarrow
  \huf_0(Y;\Z)$ is an isometric isomorphism. In view of Whyte's
  rigidity result Theorem~\ref{thm:whyterigidity} it suffices to show
  that~$\huf_0(f;\Z)(\fclz X) = \fclz Y$.
  
  Let $\alpha := \huf_0(f;\Z)(\fclz X) \in \hufp_0(Y;\Z)$. Because
  $\huf_0(f;\Z)$ is isometric, we have~$\supn \alpha = \supn{\fclz X}
  \leq 1$. Moreover, $\huf_0(f;\Z)$ induces a bijection
  between~$\hufp_0(X;\Z)$ and $\hufp_0(Y;\Z)$
  (Remark~\ref{rem:hufpmaps}). Hence, Proposition~\ref{prop:fclbig}(2)
  and the fact that $\huf_0(f;\Z)$ is an isometric isomorphism show
  that
  \[ \supn{\alpha + \beta} > 1
  \]
  holds for all~$\beta \in \hufp(Y;\Z) \setminus \{0\}$. Therefore,
  Proposition~\ref{prop:fclbig}(1) implies that $\alpha = \fclz Y$.
\end{proof}

The inclusion $i \colon H\longrightarrow G$ of a subgroup $H< G$ of
finite index of a finitely generated group is a quasi-isometry. If $G$
is amenable and $H$ is a proper subgroup, then $i$ is not uniformly
close to any bilipschitz equivalence~\cite[Theorem 3.5]{dymarz} (see also Corollary~\ref{cor:grouphoms}).  We use this to 
give an example of a quasi-isometry that is not uniformly close to
a bilipschitz equivalence but whose induced maps in uniformly finite
homology with integral coefficients are isometric isomorphisms in positive degrees.

\begin{exa}
As observed above, the inclusion $i\colon 2\Z\longrightarrow\Z$ is not uniformly 
close to a bilipschitz equivalence. In particular, by Theorem~\ref{thm:rigidity} 
the induced map $\huf_0(i;\Z)\colon\huf_0(2\Z;\Z)\longrightarrow\huf_0(\Z;\Z)$ 
is not an isometry with respect to the $\ell^{\infty}$-semi-norm. 
On the other hand, we will now show that for all~$k\in\N_{>0}$ the map 
$\huf_k(i;\Z)\colon\huf_k(2\Z;\Z)\longrightarrow\huf_k(\Z;\Z)$ is an isometric 
isomorphism with respect to the $\ell^{\infty}$-semi-norm. 

Using a ``fine'' (simplicial) version of uniformly finite homology~\cite[Definition 2.2]{attie}, one can easily see that for $k\in\N_{>1}$ we have $\huf_k(\Z;\Z)=0$. Indeed, $\Z$ is quasi-isometric to a uniformly contractible locally finite simplicial complex of dimension $1$ (namely, $\R$ with the standard triangulation). In particular, for all $k\in\N_{>1}$ the map~$\huf_k(i;\Z)\colon\huf_k(2\Z;\Z)\longrightarrow\huf_k(\Z;\Z)$ is trivially an isometric isomorphism with respect to the $\ell^{\infty}$-semi-norm. To prove the claim in degree $k=1$, it suffices to show that every non-trivial class in~$\huf_1(\Z;\Z)$ and in~$\huf_1(2\Z;\Z)$ has $\ell^{\infty}$-semi-norm equal to $1$ and thus the map $\huf_1(i;\Z)\colon\huf_1(2\Z;\Z)\longrightarrow\huf_1(\Z;\Z)$ is necessarily an isometric isomorphism.
 
As above, using fine uniformly finite homology, one can prove that the group~$\huf_1(\Z;\Z)$ is generated by the class $\gamma=\bigl[\sum_{z\in\Z}(z,z+1)\bigr]$. In particular, $\huf_1(\Z;\Z)\cong\Z$.
Let $\alpha\in\huf_1(\Z;\Z)\setminus\{0\}$ and let $n\in\Z\setminus\{0\}$ be such that $\alpha=n\cdot\gamma$. From Figure~\ref{fig:chainsinz} it is easy to see that
\begin{align*}
\sum_{z\in\Z}(z,z+1) - \sum_{z\in n\Z}(z,z+n)=
\sum_{z\in n\Z}\partial_{1}\biggl(& \sum_{j=0}^{n-1}(z+j,z+j+1,z+n)\\
&-(z+n,z+n,z+n)\biggr).
\end{align*}
In particular,
\[
\gamma=\biggl[\sum_{z\in n\Z}(z,z+n)\biggr].
\]
\begin{figure}
\begin{center}
\begin{tikzpicture}[scale=1.3]

	  \coordinate (A) at (0,0);
	  \coordinate (B) at ($(A)+(0:1)$);
	  \coordinate (C) at ($(B)+(0:1)$);
	  \coordinate (D) at ($(C)+(0:4)$);
	  \coordinate (E) at ($(D)+(0:1)$);
          
\draw[->] (A) -- (B);
\draw[->] (B) -- (C);
\draw[->] (D) -- (E);
\draw[->] (A) .. controls ($(A)+(15:2)$) and ($(A)+(15:4)$) .. (E);
\draw[->,dashed,black!50] (B) .. controls ($(B)+(15:1.5)$) and ($(B)+(15:3)$) .. (E);
\draw[->,dashed,black!50] (C) .. controls ($(C)+(15:1)$) and ($(C)+(15:2)$) .. (E);

	     \draw ($(A)-(0:0.2)$) node[circle,fill=black,scale=0.07] {}; 
	   	   \draw ($(A)-(0:0.4)$) node[circle,fill=black,scale=0.07] {};
	   	   \draw ($(A)-(0:0.6)$) node[circle,fill=black,scale=0.07] {};

	 \draw (A) node[circle,fill=black,scale=0.2] {}; 
	 	   	   \node[below=-1, font=\fontsize{8}{5}\selectfont] at (A) {$z$};
	\draw (B) node[circle,fill=black,scale=0.2] {}; 
		 \node[below=-1, font=\fontsize{8}{5}\selectfont] at (B) {$z+1$};
\draw (C) node[circle,fill=black,scale=0.2] {}; 
	   	   \node[below=-1, font=\fontsize{8}{5}\selectfont] at (C) {$z+2$};
	   	   \draw ($(C)+(0:0.2)$) node[circle,fill=black,scale=0.07] {}; 
	   	   \draw ($(C)+(0:0.4)$) node[circle,fill=black,scale=0.07] {};
	   	   \draw ($(C)+(0:0.6)$) node[circle,fill=black,scale=0.07] {};

	   	 \draw ($(D)-(0:0.2)$) node[circle,fill=black,scale=0.07] {}; 
	   	   \draw ($(D)-(0:0.4)$) node[circle,fill=black,scale=0.07] {};
	   	   \draw ($(D)-(0:0.6)$) node[circle,fill=black,scale=0.07] {};

\draw (D) node[circle,fill=black,scale=0.2] {}; 
	 \draw (E) node[circle,fill=black,scale=0.2] {}; 
	   	   \node[below=-1, font=\fontsize{8}{5}\selectfont] at (E) {$z+n$};

   \draw ($(E)+(0:0.2)$) node[circle,fill=black,scale=0.07] {}; 
	   	   \draw ($(E)+(0:0.4)$) node[circle,fill=black,scale=0.07] {};
	   	   \draw ($(E)+(0:0.6)$) node[circle,fill=black,scale=0.07] {};

\end{tikzpicture}
\end{center}
\caption{homologous chains in~$\cuf_1(\Z;\Z)$, schematically}
\label{fig:chainsinz}
\end{figure}

Similarly, for all $k\in\{0,\dots,n-1\}$ we
consider~$c_k:=~\sum_{z\in n\Z+k}(z,z+n)$.  Because translation by~$k$
is uniformly close to~$\id_\Z$, we have~$\gamma=[c_k]$ for all~$k \in
\{0,\dots,n-1\}$. Thus, 
\[
\alpha=n\cdot\gamma=\biggl[\sum_{k=0}^{n-1}c_k\biggr].
\]
Clearly, for any $k\in\{0,\dots, n-1\}$ we have $\supn{c_k}=1$. Moreover, the cycles $c_0, \dots, c_{n-1} \in\cuf_1(\Z;\Z)$ are supported on pairwise disjoint subsets of~$\Z^{2}$, and so we have:
\[
\supn{\alpha}=\supn[bigg]{\biggl[\sum_{k=0}^{n-1}c_k\biggr]}=1.
\]
Thus every non-trivial class in $\huf_1(\Z;\Z)$ has $\ell^{\infty}$-semi-norm~$1$. Analogously, one can prove that every non-trivial class in $\huf_1(2\Z;\Z)$ has $\ell^{\infty}$-semi-norm~$1$. Thus the map $\huf_1(i;\Z)\colon\huf_1(2\Z;\Z)\longrightarrow\huf_1(\Z;\Z)$ is an isometric isomorphism.
\end{exa}

\subsection{Real coefficients}\label{subsec:Rnorigidity}

The analogue of Theorem~\ref{thm:rigidity} for $\R$-coefficients does
not hold as the following simple example shows: 

\begin{prop}\label{prop:deg0R}
  Let $A := \{n^2 \mid n \in \N\} \subset \Z$, and consider $\Z \setminus A$ with 
  the metric induced from~$\Z$. Then the inclusion
  $i \colon \Z \setminus A \longrightarrow \Z 
  $
  is a quasi-isometry inducing for all~$k \in \N$ an isometric
  isomorphism
  \[ \huf_k(i;\R) \colon \huf_k(\Z \setminus A ; \R) 
     \longrightarrow \huf_k(\Z;\R),
  \] 
  but $i$ is \emph{not} uniformly close to a bilipschitz
  equivalence.
\end{prop}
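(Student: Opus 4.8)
The plan is to prove the three assertions in turn, with the isometry in degree~$0$ as the heart of the matter. First, since $\Z\setminus A$ carries the metric induced from~$\Z$, the inclusion $i$ is an \emph{isometric embedding}, and its image $\Z\setminus A$ is $1$-dense in~$\Z$ (each square $n^2$ has a non-square at distance~$1$); hence $i$ is a quasi-isometry of UDBG spaces and, by Proposition~\ref{prop:qi-invariance}, $\huf_k(i;\R)$ is an isomorphism for every~$k\in\N$. Moreover, because $i$ is injective, the induced map on $(k+1)$-tuples is injective, so the chain map $\cuf_k(i;\R)$ preserves the $\ell^\infty$-norm exactly; just as in the proof of Proposition~\ref{prop:bilipisom} this yields the inequality $\supn{\huf_k(i;\R)(\alpha)}\le\supn\alpha$ for all~$\alpha$ and all~$k$. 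It remains to establish the reverse inequality.

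For the reverse inequality in degree~$0$ I would argue by spreading mass off~$A$. Fix $\alpha\in\huf_0(\Z\setminus A;\R)$ and $\epsilon>0$, choose a cycle $c'=\sum_{x\in\Z}c'_x\cdot x$ representing $\huf_0(i;\R)(\alpha)$ with $\supn{c'}\le\supn{\huf_0(i;\R)(\alpha)}+\epsilon=:s$, and set $L:=\lceil s/\epsilon\rceil$. The goal is to modify $c'$, within its class in $\huf_0(\Z;\R)$, into a cycle supported on $\Z\setminus A$ of sup-norm at most $s+\epsilon$; applying the isomorphism $\huf_0(i;\R)^{-1}$ then produces a representative of~$\alpha$ of norm $\le s+\epsilon$. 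Since a finitely supported $0$-chain is null-homologous (by the vanishing criterion, Theorem~\ref{thm:deg0}), I may first subtract the mass sitting on the finitely many squares $n^2$ whose surrounding gap has length $\le L$, at no cost to the norm. For each of the remaining squares the gap $n^2+1,\dots,(n+1)^2-1$ of non-squares has length $>L$, so I redistribute the coefficient $c'_{n^2}$ \emph{evenly} over the $L$ non-squares $n^2+1,\dots,n^2+L$, pushing the mass along edges, which alters the $0$-chain only by a boundary. These redistributions have pairwise disjoint supports, and each affected point receives an extra amount of absolute value $\le s/L\le\epsilon$, so the resulting cycle $c$ is supported on $\Z\setminus A$, is homologous to $c'$, and satisfies $\supn c\le s+\epsilon$. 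Letting $\epsilon\to0$ gives $\supn\alpha\le\supn{\huf_0(i;\R)(\alpha)}$, hence equality. I expect this spreading step to be the main obstacle: it is precisely here that real coefficients are indispensable, since the fractional weights $c'_{n^2}/L$ cannot be realised over~$\Z$, which is exactly what makes the integral statement (Theorem~\ref{thm:rigidity}) strictly stronger.

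In degrees $k\ge1$ I would sidestep the delicate cycle-preservation issues of spreading and instead combine a quasi-inverse with vanishing. By Proposition~\ref{prop:vanishingamenable} the target $\huf_k(\Z;\R)$ has trivial $\ell^\infty$-semi-norm, so $\supn{\huf_k(i;\R)(\alpha)}=0$ for every~$\alpha$; it therefore suffices to show $\supn\alpha=0$ as well. To this end take the explicit quasi-inverse $j\colon\Z\to\Z\setminus A$ that is the identity off~$A$ and sends each $n^2$ to a nearby non-square (e.g. $n^2\mapsto n^2-1$). This map is at most $2$-to-$1$, so the induced map on $(k+1)$-tuples is at most $2^{k+1}$-to-$1$ and the chain map $\cuf_k(j;\R)$ inflates the $\ell^\infty$-norm by a factor at most $2^{k+1}$; hence $\supn{\huf_k(j;\R)(\beta)}\le 2^{k+1}\,\supn\beta$ for all~$\beta$. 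Since $j\circ i=\id_{\Z\setminus A}$, functoriality gives $\huf_k(j;\R)\circ\huf_k(i;\R)=\id$, whence $\supn\alpha=\supn{\huf_k(j;\R)(\huf_k(i;\R)(\alpha))}\le 2^{k+1}\,\supn{\huf_k(i;\R)(\alpha)}=0$. Thus both semi-norms vanish in degree $k\ge1$, and $\huf_k(i;\R)$ is trivially an isometric isomorphism.

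Finally, to see that $i$ is \emph{not} uniformly close to a bilipschitz equivalence I would invoke Whyte's rigidity result (Theorem~\ref{thm:whyterigidity}): it suffices to show that $\huf_0(i;\Z)$ does not send $\fclz{\Z\setminus A}$ to $\fclz\Z$. Here $\huf_0(i;\Z)(\fclz{\Z\setminus A})=\bigl[\sum_{x\in\Z\setminus A}x\bigr]=\fclz\Z-\bigl[\sum_{x\in A}x\bigr]$, so the claim reduces to $\bigl[\sum_{x\in A}x\bigr]\neq0$ in $\huf_0(\Z;\Z)$. This follows once more from the vanishing criterion (Theorem~\ref{thm:deg0}): for the intervals $F_N:=\{-N,\dots,N\}$ the cardinalities $|\partial_r F_N|$ are bounded independently of~$N$, whereas $\bigl|\sum_{x\in F_N}c_x\bigr|=\#(A\cap F_N)$ equals the number of squares in $\{0,\dots,N\}$, which grows like $\sqrt N\to\infty$; hence no constants $C,r$ as in the criterion can exist. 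Therefore the fundamental classes do not correspond, and $i$ is not uniformly close to a bilipschitz equivalence, as claimed.
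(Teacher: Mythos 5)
Your proposal is correct, and all three conclusions are established, but your route differs from the paper's in an instructive way. The paper proves the isometry statement for \emph{all} degrees $k$ with a single construction: for each $n\in\N_{>0}$ it defines quasi-isometric retractions $f_1,\dots,f_n\colon\Z\longrightarrow\Z\setminus A$ (the identity off~$A$, with the squares moved off~$A$ in $n$ pairwise disjoint ways), so that $f_j\circ i=\id_{\Z\setminus A}$, and then averages: $\varphi_*:=\frac1n\sum_{j=1}^n\cuf_*(f_j;\R)$ is a chain map with $\varphi_*\circ\cuf_*(i;\R)=\id$ and $\|\varphi_k\|\leq 1+(2^{k+1}-1)/n$; letting $n\to\infty$ gives the reverse inequality in every degree simultaneously. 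Your degree-$0$ argument is the same mass-spreading idea, executed at the chain level rather than through a family of maps --- correct and quite concrete --- but, as you yourself anticipate, it does not extend painlessly to cycles in higher degree, which forces your change of strategy for $k\geq 1$: there you invoke Proposition~\ref{prop:vanishingamenable} for $\huf_k(\Z;\R)$ and transfer the vanishing to $\huf_k(\Z\setminus A;\R)$ via the $2^{k+1}$-bounded quasi-inverse $j$ (this transfer is needed precisely because $\Z\setminus A$ is not a group, and it is a nice touch). The argument is sound, and there is no circularity --- Proposition~\ref{prop:vanishingamenable} is proved in Section~\ref{sec:vanishing} using only Proposition~\ref{prop:grouptwistednorm} and $\ell^1$-homology, independently of the present proposition --- but it buys brevity at the price of a forward reference and of the heavier $\ell^1$-machinery, whereas the paper's averaging argument is elementary, self-contained, and uniform in $k$. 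The non-bilipschitz part of your proof is identical to the paper's ($[\chi_A]\neq 0$ by the vanishing criterion, then Theorem~\ref{thm:whyterigidity}); you merely spell out the computation with $F_N=\{-N,\dots,N\}$ that the paper leaves implicit.

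One citation in your degree-$0$ step needs tightening: you justify discarding the finitely many squares with short gaps by Theorem~\ref{thm:deg0}, but that theorem is stated for $\Z$-coefficients, while your cycle $c'$ has real coefficients. The fact you need is nevertheless elementary: a finitely supported chain $\sum_{x}a_x\cdot x\in\cuf_0(\Z;\R)$ equals the boundary of the uniformly finite $1$-chain $\sum_{x}a_x\cdot\sum_{m\geq 0}(x+m+1,x+m)$, which pushes each mass to infinity along a ray (the edge coefficients are bounded by $\sum_x|a_x|<\infty$ and all edges have length~$1$); alternatively, quote the real-coefficient version of Whyte's criterion rather than Theorem~\ref{thm:deg0}. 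With that repair your proof is complete.
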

\begin{proof}
  The inclusion~$i \colon \Z \setminus A \longrightarrow \Z$ is an
  isometric embedding with quasi-dense image, whence a
  quasi-isometry. Thus, for all~$k \in\N$ the induced map 
  $\huf_k(i;\R) \colon \huf_k(\Z \setminus A ; \R)
  \longrightarrow \huf_k(\Z;\R)$ is an isomorphism.   
  
  Moreover, $\huf_k(i;\R)$ is isometric: Let $\alpha \in \huf_k(\Z
  \setminus A;\R)$.  Because $i$ is injective, we have
  $\supn{\huf_k(i;\R)(\alpha)} \leq \supn{\alpha}$. Conversely, let $n
  \in \N_{>0}$. For~$j \in \{1,\dots,n\}$ we consider the map
  \begin{align*}
    f_j \colon \Z & \longrightarrow \Z \setminus A \\
    x & \longmapsto
    \begin{cases}
      x & \text{if $x \in \Z \setminus A$}\\
      x + j & \text{if $x \in A$ and $x \geq n^2$}\\
      -n \cdot x -j & \text{if $x \in A$ and $x < n^2$}.
    \end{cases}
  \end{align*}
  Clearly, $f_j$ is a quasi-isometric embedding and $f_j \circ i =
  \id_{\Z \setminus A}$. We then set
  \[ \varphi_* := \frac 1n \cdot \sum_{j=1}^n \cuf_*(f_j;\R) 
     \colon \cuf_*(\Z;\R) \longrightarrow \cuf_*(\Z \setminus A;\R).
  \]
  Notice that $\varphi_*$ is a chain map satisfying~$\varphi_* \circ
  \cuf_*(i;\R) = \id$.  The restrictions~$f_1|_{A}, \dots, f_n|_{A}$
  are injective and have pairwise disjoint images. By counting
  pre-images and types of elements in~$(\Z \setminus A)^{d+1}$ we
  obtain that
  \[ \|\varphi_k\| \leq 1 + \frac {2^{d+1}-1}n
  \]
  holds with respect to the corresponding $\ell^\infty$-norms. In particular, 
  \[ \supn{\alpha}
     = \supn[big]{H_0(\varphi_*)\circ \huf_0(i;\R)(\alpha)}
     \leq \Bigl(1 + \frac {2^{d+1} -1}n\Bigr) \cdot \supn[big]{\huf_0(i;\R)(\alpha)}.
  \]
  Taking the infimum over all~$n \in \N_{>0}$ gives the desired estimate.

  On the other hand, Whyte's vanishing criterion
  (Theorem~\ref{thm:deg0}) shows that~$[\chi_A] \neq 0$
  in~$\huf_0(\Z;\Z)$. Hence, in~$\huf_0(\Z;\Z)$ we obtain
  \[ \huf_0(i;\Z) \fclz{\Z \setminus A} = \fclz \Z - [\chi_A] \neq \fclz \Z, 
  \]
  and so $i$ is \emph{not} uniformly close to a bilipschitz equivalence 
  (Theorem~\ref{thm:whyterigidity}).
\end{proof}

As we have seen in Proposition~\ref{prop:fclbig} in the case of
$\Z$-coefficients, for an amenable UDBG space $X$ we have $\supn{\fclz
  X}\leq 1$. More precisely, since a non-trivial class in uniformly
finite homology with $\Z$-coefficients cannot have
$\ell^{\infty}$-semi-norm strictly smaller than 1, we have
$\supn{\fclz X}= 1$. Similarly, in the amenable case also the
fundamental class with \mbox{$\R$-co}\-efficients is rigid with 
respect to the $\ell^\infty$-semi-norm in the following sense:

\begin{prop}[semi-norm of the fundamental class]\label{prop:fclsemi-norm}
Let $X$ be an amenable UDBG space and let $\fclr X\in\huf_0(X;\R)$ be its fundamental class. 
Then 
\[ \supn{\fclr X}= 1.\]
\end{prop}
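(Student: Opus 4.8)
The plan is to establish the two inequalities $\supn{\fclr X}\le 1$ and $\supn{\fclr X}\ge 1$ separately. The upper bound is immediate from the definition of the semi-norm: the fundamental class is represented by the cycle $\sum_{x\in X}1\cdot x\in\cuf_0(X;\R)$, whose $\ell^\infty$-norm equals $\sup_{x\in X}|1|=1$. The whole point of the proposition is therefore the lower bound, and this is where amenability of $X$ must be used; indeed, for non-amenable $X$ the fundamental class vanishes and the semi-norm is $0$.

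For the lower bound I would argue by contradiction using Whyte's vanishing criterion (Theorem~\ref{thm:deg0}). Suppose $\supn{\fclr X}<1$. Then there is a cycle $c=\sum_{x\in X}c_x\cdot x\in\cuf_0(X;\R)$ representing $\fclr X$ with $\supn c=s<1$, so that $|c_x|\le s$ for all $x\in X$. Since $c$ and $\sum_{x\in X}1\cdot x$ both represent $\fclr X$, their difference is a boundary, i.e.\ $[\sum_{x\in X}(1-c_x)\cdot x]=0$ in $\huf_0(X;\R)$. The vanishing criterion then yields constants $C,r\in\N$ with
\[
  \fa{F\subset X\text{ finite}}
  \Bigl|\sum_{x\in F}(1-c_x)\Bigr|\le C\cdot|\partial_r F|.
\]
The key step is now a positivity estimate: because $c_x\le s<1$ for every $x$, each summand satisfies $1-c_x\ge 1-s>0$, whence $|\sum_{x\in F}(1-c_x)|=\sum_{x\in F}(1-c_x)\ge(1-s)\cdot|F|$ for all finite $F$. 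Combining the two bounds gives $|\partial_r F|\ge\frac{1-s}{C}\cdot|F|$ for every finite $F\subset X$, which directly contradicts amenability: a F\o lner sequence (Definition~\ref{defi:amenableUDBG}) provides finite sets with $|\partial_r F|/|F|$ arbitrarily small. Hence $\supn{\fclr X}\ge 1$, and together with the upper bound we obtain $\supn{\fclr X}=1$.

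The main obstacle is the correct exploitation of positivity: one must pass to a cycle whose coefficients are \emph{uniformly} close to $1$, so that the nonnegative differences $1-c_x$ accumulate to a lower bound growing linearly in $|F|$, which is incompatible with the sub-linear boundary growth forced by the F\o lner condition. Alternatively, the lower bound admits a more conceptual proof paralleling the group case: from a F\o lner sequence one builds a mean $m$ on $\cuf_0(X;\R)=\ell^\infty(X;\R)$ that vanishes on $0$-boundaries---here the finite propagation of $1$-chains together with bounded geometry confines the boundary contribution of $\partial c'$ to an $r$-neighbourhood of the F\o lner sets, where the F\o lner estimate makes the averaged contribution tend to $0$. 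Such a mean descends to a functional $\overline m\colon\huf_0(X;\R)\to\R$ of norm at most $1$ with $\overline m(\fclr X)=1$, so that $1=|\overline m(\fclr X)|\le\supn{\fclr X}$.
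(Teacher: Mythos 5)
Your proof is correct, but your primary argument takes a genuinely different route from the paper. The paper proves the lower bound exactly along the lines of your \emph{alternative} sketch: from a F\o lner sequence $(S_n)_{n\in\N}$ and a non-principal ultrafilter $\omega$ it builds the averaging map $\sum_{x\in X} c_x\cdot x \mapsto \lim_{n\in\omega}\frac{1}{|S_n|}\cdot\sum_{x\in S_n}c_x$, checks (using the F\o lner condition) that it kills boundaries, and thus obtains a functional $m\colon \huf_0(X;\R)\longrightarrow\R$ with $m(\fclr X)=1$ and $|m(\alpha)|\le\supn{\alpha}$ for all classes $\alpha$. Your main argument instead runs by contradiction through the degree-$0$ vanishing criterion combined with the positivity estimate $1-c_x\ge 1-s>0$, yielding a linear isoperimetric inequality $|\partial_r F|\ge\frac{1-s}{C}\cdot|F|$ that contradicts amenability. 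Your route is more elementary: it avoids ultrafilters and invariant means entirely and uses only the easy direction of the vanishing criterion. The paper's functional approach buys slightly more, namely an explicit bounded functional on $\huf_0(X;\R)$ detecting $\fclr X$, which fits the mean-invisibility discussion in Section~\ref{sec:ufsup} and bounds $|m(\alpha)|$ for \emph{all} classes at once.

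One point you should fix: Theorem~\ref{thm:deg0} is stated in the paper for $\Z$-coefficients only, whereas you apply it to the $\R$-cycle $\sum_{x\in X}(1-c_x)\cdot x$. This is harmless but needs to be addressed, because the citation as given does not literally cover your case. You only use the implication ``$[c]=0$ implies the partial sums are bounded by $C\cdot|\partial_r F|$'', and that direction holds over any normed coefficient ring by the standard edge-counting computation: if $\sum_{x\in X} z_x\cdot x=\partial b$ with $b\in\cuf_1(X;\R)$ of propagation at most $r$ and coefficients bounded by $K$, then in $\sum_{x\in F}z_x$ the contributions of all edges of $b$ with both endpoints in $F$ cancel, and every surviving edge has exactly one endpoint in $\partial_r F$, so bounded geometry gives $\bigl|\sum_{x\in F}z_x\bigr|\le C\cdot|\partial_r F|$ with $C$ depending only on $K$, $r$, and the geometry of $X$. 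Either insert this short computation or cite the $\R$-coefficient form of the criterion from Whyte or Block--Weinberger; with that adjustment your argument is complete.
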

\begin{proof}
By definition of the fundamental class, we have~$\supn{\fclr X}\leq
1$. For the converse estimate, we use an averaging argument: Let
$(S_n)_{n \in \N}$ be a F\o lner sequence for~$X$
(Definition~\ref{defi:amenableUDBG}), and let $\omega$ be a
non-principal ultrafilter on~$\N$. A straightforward calculation using
the F\o lner condition shows that the map
\begin{align*}
  \cuf_0(X;\R) & \longrightarrow \R \\
  \sum_{x\in X} c_x \cdot x & \longmapsto \lim_{n \in \omega} \frac1{|S_n|} \cdot \sum_{x \in S_n} c_x
\end{align*}
induces a well-defined map~$m \colon \huf_0(X;\R) \longrightarrow \R$ with
$m(\fclr X) = 1$ and
\[ \fa{\alpha \in \huf_0(X;\R)} \bigl| m(\alpha) \bigr| \leq \supn \alpha. 
\]
Hence, $\supn{\fclr X} \geq 1$. 
\end{proof}

This semi-norm rigidity of the fundamental class in the amenable case
leads to the following examples:

\begin{exa}\label{exa:nonhomog}
The $\ell^{\infty}$-semi-norm on uniformly finite homology
with $\Z$-co\-ef\-ficients is \emph{not} homogeneous, in general. For
example: In~$\huf_0(\Z;\Z)$ we have
\[ 2 \cdot [\chi_{2\Z}] 
   = [\chi_{2 \Z}] + [\chi_{2\Z + 1}] 
   = \fclz \Z,
\] 
which is non-trivial because $\Z$ is amenable (Theorem~\ref{thm:charamenability}). 
So, $[\chi_{2\Z}] \neq 0$ in~$\huf_0(\Z;\Z)$ and $\supn[big]{[\chi_{2\Z}]}=1=\supn[big]{\fclz \Z}$, but
\[
\supn[big]{2\cdot[\chi_{2\Z}]}=\supn[big]{\fclz \Z}=1< 2=2\cdot\supn[big]{[\chi_{2\Z}]}. 
\]
\end{exa}

\begin{exa}\label{exa:nonfsn}
  The $\ell^\infty$-semi-norm is \emph{not} a functorial semi-norm
  on~$\huf_*(\args;\Z)$ and~$\huf_*(\args;\R)$ in the sense of
  Gromov~\cite{gromov,loehffsnrep}: For example, the quasi-isometric embedding
  \begin{align*}
    f \colon \Z & \longrightarrow \Z \\
    n  & \longmapsto \Bigl\lfloor \frac n 2 \Bigr\rfloor
  \end{align*}
  does not satisfy~$\|\huf_0(f;\Z)\| \leq 1$ or $\|\huf_0(f;\R)\| \leq 1$ because
  \begin{align*}
    \supn[big]{\huf_0(f;\R)\fclr \Z} 
    & = \supn[big]{2 \cdot \fclr \Z} 
    = 2 \cdot \supn[big]{\fclr \Z} = 2\\ 
    & > 1 = \supn[big]{\fclr \Z},
    \\
    \supn[big]{\huf_0(f;\Z)\fclz \Z} 
    & = \supn[big]{2 \cdot \fclz \Z} 
    \geq \supn[big]{2 \cdot \fclr \Z} = 2 \\
    & > 1 = \supn[big]{\fclz \Z}.
  \end{align*}
\end{exa}

\subsection{Group homomorphisms}\label{subsec:grouphoms}

We can translate a result by Dymarz~\cite{dymarz} into the context of
the $\ell^\infty$-semi-norm on uniformly finite homology in
degree~$0$. In particular, we can characterise when a group
homomorphism between finitely generated amenable groups with finite
kernel and cokernel is uniformly close to a bilipschitz equivalence
using isometric isomorphisms on uniformly finite homology:

\begin{cor}\label{cor:grouphoms}
  Let $G$, $H$ be finitely generated amenable groups, let
  $f\colon G \longrightarrow~H$ be a homomorphism with finite kernel
  and finite cokernel. Then the following are equivalent:
  \begin{enumerate}
    \item\label{it:kercoker} We have $|\ker(f)|=|\coker(f)|$.
    \item\label{it:bilip} The map $f$ is uniformly close to a bilipschitz equivalence.
    \item\label{it:zisom} The induced map
      $\huf_0(f;\Z)\colon\huf_0(G;\Z)\longrightarrow\huf_0(H;\Z)$ is
      an isometric isomorphism with respect to the
      $\ell^{\infty}$-semi-norm.
    \item\label{it:risom} The induced map
      $\huf_0(f;\R)\colon\huf_0(G;\R)\longrightarrow\huf_0(H;\R)$ is
      an isometric isomorphism with respect to the
      $\ell^{\infty}$-semi-norm.
  \end{enumerate}
\end{cor}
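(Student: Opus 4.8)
The plan is to prove the four conditions equivalent by closing the cycle of implications
\[
(1)\Longleftrightarrow(2)\Longleftrightarrow(3),\qquad (2)\Longrightarrow(4)\Longrightarrow(1),
\]
so that the real-coefficient condition~(4) is drawn into the equivalence through a homomorphism-specific computation in the final step. First I would record the standing fact that, since $f$ has finite kernel and finite cokernel, it is a quasi-isometry: the finite kernel makes $f$ a quasi-isometric embedding and the finiteness of $\coker(f)$, i.e.\ $[H:f(G)]<\infty$, makes its image quasi-dense. This is what licenses the use of Theorem~\ref{thm:rigidity} and Proposition~\ref{prop:bilipisom} for $f$ viewed as a quasi-isometry between the UDBG spaces $G$ and $H$.

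The implications $(1)\Leftrightarrow(2)$, $(2)\Leftrightarrow(3)$, and $(2)\Rightarrow(4)$ are then essentially citations. The equivalence $(1)\Leftrightarrow(2)$ is exactly Dymarz's characterisation~\cite[Theorem~3.5]{dymarz} of when such a homomorphism between amenable groups is uniformly close to a bilipschitz equivalence, the criterion being $|\ker(f)|=|\coker(f)|$. The equivalence $(2)\Leftrightarrow(3)$ is the specialisation of the rigidity theorem Theorem~\ref{thm:rigidity}, namely the equivalence of its bilipschitz condition and its degree-$0$ integral-isometry condition. Finally $(2)\Rightarrow(4)$ is immediate from Proposition~\ref{prop:bilipisom} applied with the normed ring $R=\R$ in degree~$0$.

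The substantive step, and the one I expect to be the main obstacle, is $(4)\Rightarrow(1)$: here the $\R$-analogue of Theorem~\ref{thm:rigidity} is unavailable (Proposition~\ref{prop:deg0R}), so I must exploit the extra algebraic structure by tracking the fundamental class. Since each fibre of $f$ is a coset of $\ker(f)$, pushing the representative $\chi_G$ forward gives $\cuf_0(f;\R)(\chi_G)=|\ker(f)|\cdot\chi_{f(G)}$. Writing $H$ as a disjoint union of $[H:f(G)]=|\coker(f)|$ left cosets of $f(G)$ and using that left translation is at bounded distance from $\id_H$, hence induces the identity on $\huf_0(\args;\R)$, one obtains $[\chi_{f(G)}]=\tfrac{1}{|\coker(f)|}\cdot\fclr H$ and therefore
\[
\huf_0(f;\R)(\fclr G)=\frac{|\ker(f)|}{|\coker(f)|}\cdot\fclr H.
\]
I would then invoke that the $\ell^\infty$-semi-norm with $\R$-coefficients is homogeneous (in contrast to the integral one) together with $\supn{\fclr G}=1=\supn{\fclr H}$ for the amenable spaces $G$ and $H$ (Proposition~\ref{prop:fclsemi-norm}). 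If (4) holds, then $\huf_0(f;\R)$ is isometric, so
\[
1=\supn{\fclr G}=\supn[big]{\huf_0(f;\R)(\fclr G)}=\frac{|\ker(f)|}{|\coker(f)|}\cdot\supn{\fclr H}=\frac{|\ker(f)|}{|\coker(f)|},
\]
which forces $|\ker(f)|=|\coker(f)|$, i.e.\ condition~(1). The only care needed lies in the coset--translation argument yielding $[\chi_{f(G)}]=\tfrac{1}{|\coker(f)|}\fclr H$ and in confirming homogeneity of the real semi-norm; both are short, and together they close the cycle and thereby establish the equivalence of all four conditions.
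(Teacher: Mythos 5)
Your proposal follows essentially the same route as the paper's proof: $(1)\Leftrightarrow(2)$ via Dymarz, $(2)\Leftrightarrow(3)$ via Theorem~\ref{thm:rigidity}, $(2)\Rightarrow(4)$ via Proposition~\ref{prop:bilipisom}, and $(4)\Rightarrow(1)$ by evaluating the isometry on the fundamental class, combining Proposition~\ref{prop:fclsemi-norm}, homogeneity of the $\ell^\infty$-semi-norm with $\R$-coefficients, and the identity $\huf_0(f;\R)(\fclr G)=\frac{|\ker(f)|}{|\coker(f)|}\cdot\fclr H$; the paper cites this identity without proof, whereas you sketch a derivation, which is welcome extra detail.

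Two points in your sketch need repair, though neither affects the architecture. First, the translation argument: for a left-invariant word metric it is \emph{right} translation $x\mapsto xh$ that is uniformly close to $\id_H$ (since $d(x,xh)=|h|_S$), whereas left translation satisfies $d(x,hx)=|x^{-1}hx|_S$, which is unbounded in general even for amenable $H$ (e.g.\ lamplighter groups), so left translations cannot be fed into the uniform-closeness argument. The fix is immediate: decompose $H$ into the $m=|\coker(f)|$ \emph{right} cosets $f(G)h_1,\dots,f(G)h_m$, note that $x\mapsto xh_i$ is a quasi-isometric embedding of $f(G)$ into $H$ uniformly close to the inclusion, hence $[\chi_{f(G)h_i}]=[\chi_{f(G)}]$ in $\huf_0(H;\R)$ by Proposition~\ref{prop:qi-invariance}, and therefore $\fclr H=m\cdot[\chi_{f(G)}]$; combined with $\cuf_0(f;\R)(\chi_G)=|\ker(f)|\cdot\chi_{f(G)}$ this yields the identity, and the rest of your $(4)\Rightarrow(1)$ goes through unchanged. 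Second, a smaller imprecision: finiteness of the kernel alone does not make a homomorphism a quasi-isometric embedding (subgroups can be distorted, e.g.\ the centre of the Heisenberg group); it is the finiteness of the kernel \emph{together with} the finite index of $f(G)$ in $H$ (which makes $f(G)$ undistorted and quasi-dense) that makes $f$ a quasi-isometry, which is all that the paper asserts and all that you need.
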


\begin{proof}
  Notice that $f$ is a quasi-isometry because $f$ has finite kernel
  and cokernel.  The equivalence~``$(\ref{it:kercoker})\Leftrightarrow
  (\ref{it:bilip})$'' is a result of
  Dymarz~\cite[Theorem~3.6]{dymarz}. The
  equivalence~``$(\ref{it:bilip})\Leftrightarrow (\ref{it:zisom})$''
  follows from Theorem~\ref{thm:rigidity}.  The
  implication~``$(\ref{it:bilip})\Rightarrow (\ref{it:risom})$'' is a
  consequence of Proposition~\ref{prop:bilipisom}.  The
  implication~``$(\ref{it:risom}) \Rightarrow (\ref{it:kercoker})$''
  follows from the fact that the $\R$-fundamental class has
  $\ell^{\infty}$-semi-norm equal to~$1$
  (Proposition~\ref{prop:fclsemi-norm}) and that
  \[ \huf_0(f;\R)(\fclr G) = \frac{|\ker(f)|}{|\coker(f)|} \cdot \fclr H.
  \qedhere
  \]
\end{proof}

\section{Higher degree: Vanishing}\label{sec:vanishing}

In this section we will show that the $\ell^\infty$-semi-norm on uniformly 
finite homology with $\R$-coefficients is trivial in higher degrees. 

\subsection{The non-amenable case}

We begin with the proof of 
Proposition~\ref{prop:vanishingnonamenable}: To this end, we shrink
the involved coefficients by spreading the chain over the space;
non-amenability allows us to keep control over the
$\ell^\infty$-semi-norms of the classes in question.

\begin{proof}[Proof of Proposition~\ref{prop:vanishingnonamenable}]
  We consider the double~$Y := X \times \{0,1\}$ of~$X$ with respect 
  to the sum metric
  \begin{align*}
    Y \times Y & \longrightarrow \R_{\geq 0}\\
    \bigl( (x,j), (x',j') 
    \bigr)
    & \longmapsto d(x,x') + |j - j'|,
  \end{align*}
  where $d$ is the metric on~$X$. Then $Y$ is a UDBG space and 
  \begin{align*}
    p \colon Y & \longrightarrow X \\
    (x,j) & \longmapsto x
  \end{align*}
  is a quasi-isometry (for example, a quasi-inverse is given by the
  inclusion into the $0$-factor). Because $X$ and hence also $Y$ are
  non-amenable, $p$ is uniformly close to a bilipschitz equivalence
  (Corollary~\ref{cor:rignonamenable}). Hence, for all~$k \in \N$ the
  induced map~$\huf_k(p;\R) \colon \huf_k(Y;\R) \longrightarrow
  \huf_k(X;\R)$ is an isometric isomorphism
  (Proposition~\ref{prop:bilipisom}). Let $\alpha \in \huf_k(X;\R)$. 
  Then 
  \[ \beta := \frac 12 \cdot \alpha \times 0 
            + \frac12 \cdot \alpha \times 1 
     \in \huf_k(Y;\R)
  \]
  satisfies~$\huf_k(p;\R)(\beta) = 1/2\cdot \alpha + 1/2 \cdot \alpha
  = \alpha$ and  \begin{align*}
    \supn \alpha 
    = \supn[big]{\huf_k(p;\R)(\beta)}
    = \supn \beta
    = \supn[Big]{\frac12 \cdot \alpha \times 0 + \frac12 \cdot \alpha \times 1}
    \leq \frac12 \cdot \supn\alpha.
  \end{align*}
  Therefore, we obtain~$\supn \alpha = 0$.
\end{proof}

\subsection{The amenable case}

Finally, we will prove Proposition~\ref{prop:vanishingamenable}: The
key is to use the interpretation of uniformly finite homology of
groups in terms of group homology with $\ell^\infty$-coefficients and
to apply a vanishing result on $\ell^1$-homology.

\begin{proof}[Proof of Proposition~\ref{prop:vanishingamenable}]
  We have~$\huf_k(G;\R) \cong H_k(G;\ell^\infty(G;\R))$
  and the corresponding isomorphism is isometric 
  with respect to 
  the \mbox{$\ell^\infty$-semi-norms} 
  (Proposition~\ref{prop:grouptwistednorm}). Hence, it suffices to
  show that the $\ell^{\infty}$-semi-norm~$\supn{\args}$ is trivial on~$H_k(G;\ell^\infty(G;\R))$.

  Clearly, $\supn{\,\cdot\,} \leq \|\cdot\|_1$
  on~$H_k(G;\ell^\infty(G;\R))$, where $\|\cdot\|_1$ is the
  $\ell^1$-semi-norm induced from the bar
  complex. However, since $k>0$ and $G$ is amenable we
  know that $\|\cdot\|_1 = 0$ on~$H_k(G;\ell^\infty(G;\R))$: In fact,
  the comparison map
  \[ H_k\bigl(G;\ell^\infty(G;\R)\bigr) 
    \longrightarrow H_k^{\ell^1}\bigl(G;\ell^{\infty}(G;\R)\bigr)
  \]
  is isometric with respect to the $\ell^1$-semi-norm and
  $H_k^{\ell^1}(G;\ell^{\infty}(G;\R))$ is trivial~\cite[Proposition~2.4,
    Corollary~5.5]{loehl1}.
\end{proof}

\appendix
\section{Review of uniformly finite homology}\label{appx:uf}

We start by recalling the category of UDBG spaces
(Section~\ref{subsec:udbg}) and the definition of uniformly finite
homology for UDBG~spaces (Section~\ref{subsec:ufdef}) and for finitely
generated groups (Section~\ref{subsec:ufgroups}). In
Section~\ref{subsec:uffcl}, we review the fundamental class in
uniformly finite homology and its relation with rigidity and
amenability.

\subsection{UDBG spaces}\label{subsec:udbg}

For simplicity, we will work in the category of UDBG spaces. We briefly 
recall the definitions:

\begin{defi}[UDBG space]
  A metric space~$X$ is a \emph{UDBG space} if it is uniformly discrete 
  and of bounded geometry, i.e., if
  \begin{itemize}
\item There exists $\epsilon >0$ such that 
\[
\fa{x,y\in X} d(x,y)<\epsilon \Longleftrightarrow x=y.
\]
\item For every~$r\in\R_{>0}$ there exists $K >0$ such that
\[
\fa{x\in X} \left|B_r(x)\right|<K.
\]
\end{itemize}
\end{defi}

\begin{exa}
  Every finitely generated group equipped with some word metric of a
  finite generating set is a UDBG space. The space $\bigl\{n^2 \bigm|
  n\in\N\bigr\}$ with the metric induced by the standard metric in
  $\R$ is a UDBG space.
  Every manifold with a Riemannian metric of bounded geometry is quasi-isometric to a UDBG space contained in it (namely the space of vertices of a suitable triangulation)~\cite[Theorem 1.14]{attie}.
  \end{exa}

\begin{defi}[quasi-isometric embedding, quasi-isometry, and bilipschitz equivalence]\label{defi:qi_and_be}
Let $(X,d_X)$ and $(Y,d_Y)$ be UDBG spaces and let $f\colon X\longrightarrow Y$ be a map.
\begin{itemize}
\item We say that $f$ is a \emph{quasi-isometric embedding} if there are~$C,D\in\R_{>0}$ such that for all $x,x'\in X$ we have
\[
 \frac{1}{C}\cdot d_{X}(x,x')-D\leq d_{Y}\bigl(f(x),f(x')\bigr)\leq C\cdot d_{X}(x,x')+D.
\]
\item The map~$f$ is \emph{uniformly close} to a map~$f' \colon X \longrightarrow Y$ if 
\[
\sup_{x \in X} d_Y\bigl(f(x),f'(x)\bigr) < \infty
\]
\item We say that $f$ is a \emph{quasi-isometry} if it is a quasi-isometric embedding and if it admits a quasi-inverse quasi-isometric embedding, i.e., if there exists a quasi-isometric embedding~$g\colon Y\longrightarrow X$ such that $f\circ g$ is uniformly close to $\id_Y$ and $g\circ f$ is uniformly close to $\id_X$.
\item The map~$f$ is a \emph{bilipschitz embedding} if there is a~$C\in\R_{>0}$ such that
\[
\fa{x,x'\in X} \frac{1}{C}\cdot d_{X}(x,x')\leq d_{Y}\bigl(f(x),f(x')\bigr)\leq C\cdot d_{X}(x,x').
\]
\item We say that $f$ is a \emph{bilipschitz equivalence} if it is a bilipschitz embedding and if there is a bilipschitz embedding $g\colon Y\longrightarrow X$ such that $f\circ g=\id_Y$ and $g\circ f=\id_X$. (For UDBG~spaces 
this is equivalent to $f$ being a bijective quasi-isometry.)
\end{itemize}
\end{defi}

\begin{defi}(category of UDBG spaces)
We define the category $\UDBG$ as follows:
\begin{itemize}
\item The objects in $\UDBG$ are UDBG spaces.
\item The set of morphisms between two objects $X$ and $Y$ in $\UDBG$ is given by the set \[\QIE(X,Y):=\bigl\{f\colon X\longrightarrow Y \bigm|  f \; \text{quasi-isometric embedding}\bigr\}\big{/}\sim\]
where $f\sim f'$ if and only if $f$ is uniformly close to $f'$. 
\item Composition of morphisms is given by ordinary composition of representatives.
\end{itemize}
\end{defi}

Clearly, quasi-isometries of UDBG~spaces correspond to isomorphisms in the category~$\UDBG$.

\subsection{Uniformly finite homology of UDBG spaces}\label{subsec:ufdef}

Uniformly finite chains are combinatorial infinite chains on UDBG
spaces that satisfy certain geometric finiteness conditions. We consider uniformly finite chains with coefficients in a normed ring with unit.

\begin{defi}[normed ring]\label{defi:coefficientnorm}
Let $R$ be a ring with unit. A \emph{norm on $R$} is a function
$
|\cdot|\colon R \longrightarrow\R_{\geq 0}
$
satisfying the following conditions:
\begin{enumerate}
\item For all~$r\in R$ we have $|r|=0$ if and only if $r=0$.
\item For all~$r,r'\in R$ we have $|r+r'|\leq |r|+|r'|$.
\item For all~$r,r'\in R$ we have $|r\cdot r'|=|r|\cdot |r'|$.
\end{enumerate}
\end{defi}

\begin{defi}(uniformly finite homology)\label{defi:uf}
Let $R$ be a normed ring with unit and $X$ be a UDBG space. For each~$n\in\N$ the space of \emph{uniformly finite $n$-chains} is the $R$-module~$\cuf_n(X;R)$ whose elements are functions of type~$X^{n+1} \longrightarrow R$, written 
as formal sums of the form
\[
c=\sum_{\ol{x}\in X^{n+1}}c_{\ol{x}}\cdot\ol{x},
\]
satisfying the following conditions:
\begin{itemize}
\item For any $\ol{x}\in X^{n+1}$, we have $c_{\ol{x}}\in R$. 
Moreover, there exists a constant~$K\in\R_{>0}$ such that
\[
\fa{\ol{x}\in X^{n+1}} |c_{\ol{x}}|<K
\]
\item There exists a constant $R\in\R_{>0}$ such that:
\[
\fa{\ol{x}=(x_0,\dots,x_n)\in X^{n+1}} \sup_{i,j\in\{0,\dots,n\}} d(x_i,x_j)>R \Longrightarrow c_{\ol{x}}=0.
\]
\end{itemize}
For each $n\in\N$, we define a boundary operator 
\[
\partial_n\colon \cuf_n(X;R)\longrightarrow\cuf_{n-1}(X;R)
\]
that takes every $\ol{x}\in X^{n+1}$ to 
\[
\partial_n(\ol{x})=\sum_{j=0}^{n}(-1)^{j}\cdot (x_0,\dots,\widehat{x}_j,\dots,x_n)
\] 
and is extended in the obvious way to all of~$\cuf_n(X;R)$; this map is indeed well-defined. 
Moreover, for each $n\in\N$ we have $\partial_{n}\circ\partial_{n+1}=0$. In this way we get a well-defined chain complex. 
The homology of $(\cuf_n(X;R),\partial_n)_{n\in\N}$ is the \emph{uniformly finite homology of $X$} and it is denoted by $\huf_*(X;R)$.
\end{defi}

Block and Weinberger~\cite[Proposition 2.1]{blockweinberger} observed that uniformly finite homology 
is quasi-isometry invariant:

\begin{prop}[quasi-isometry invariance]\label{prop:qi-invariance}
Let $R$ be a normed ring with unit. 
Let $X,Y$ be UDBG spaces and let~$f\colon X\longrightarrow Y$ be a quasi-isometric embedding. Then $f$ induces a chain map, defined for each~$n\in\N$ by 
\begin{align*}
\cuf_n(f;R)\colon\cuf_n(X;R)&\longrightarrow \cuf_n(Y;R)\\
 \sum_{\ol{x}\in X^{n+1}} c_{\ol{x}}\cdot\ol{x} &\longmapsto \sum_{\ol{x}\in X^{n+1}}c_{\ol{x}}\cdot \bigl(f(x_0), \dots, f(x_n)\bigr). 
\end{align*}
If $f$ is uniformly close to a quasi-isometric embedding $f'\colon X \longrightarrow Y$, then \[\huf_*(f;R)=\huf_*(f';R)\colon \huf_n(X;R)\longrightarrow \huf_n(Y;R).\]
In particular, any quasi-isometry induces an isomorphism in uniformly finite homology.
\end{prop}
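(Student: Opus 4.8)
The plan is to establish the three assertions in order: that $\cuf_n(f;R)$ is a well-defined chain map, that uniformly close quasi-isometric embeddings induce chain homotopic maps, and that these two facts together force quasi-isometries to induce isomorphisms. Throughout, the algebra is the easy part; the real work is checking at each stage that the chains we write down again satisfy the two uniform finiteness conditions.

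First I would verify that the displayed formula for~$\cuf_n(f;R)$ really lands in~$\cuf_n(Y;R)$. The subtlety is that $f$ need not be injective, so the coefficient of a tuple~$(y_0,\dots,y_n)\in Y^{n+1}$ in the image is the sum of all~$c_{\ol{x}}$ over tuples~$\ol{x}$ with $(f(x_0),\dots,f(x_n)) = (y_0,\dots,y_n)$. To bound this, I would use the lower quasi-isometric estimate $\frac1C d_X(x,x') - D \leq d_Y(f(x),f(x'))$: any two points with the same image lie within distance~$CD$, and bounded geometry then caps the number of such points by a constant~$N$, so each point of~$Y$ has at most~$N$ preimages and each tuple at most~$N^{n+1}$. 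Together with the coefficient bound on the original chain this controls the image coefficients. For the support condition, the upper estimate $d_Y(f(x_i),f(x_j)) \leq C\, d_X(x_i,x_j) + D$ sends a tuple of diameter~$\leq R$ to one of diameter~$\leq CR + D$, so bounded propagation is preserved. The chain map identity $\partial \circ \cuf_n(f;R) = \cuf_{n-1}(f;R) \circ \partial$ is then routine, since applying $f$ coordinatewise commutes with the simplicial face maps.

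The heart of the argument is the second assertion, for which I would build an explicit prism homotopy. For a tuple $\ol{x} = (x_0,\dots,x_n)$ set
\[
  h_n(\ol{x}) := \sum_{j=0}^n (-1)^j \cdot \bigl( f(x_0),\dots,f(x_j), f'(x_j),\dots,f'(x_n) \bigr),
\]
extended linearly, and verify the standard prism identity $\partial_{n+1} \circ h_n + h_{n-1} \circ \partial_n = \cuf_n(f';R) - \cuf_n(f;R)$. The one input specific to uniformly finite homology is that $h_n(c) \in \cuf_{n+1}(Y;R)$: the coefficient bound follows by the same preimage-counting as above, while bounded propagation is exactly where uniform closeness enters. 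Writing $L := \sup_{x \in X} d_Y(f(x),f'(x)) < \infty$, the triangle inequality bounds the diameter of each mixed tuple $(f(x_0),\dots,f(x_j),f'(x_j),\dots,f'(x_n))$ by $CR + D + L$ whenever $\ol{x}$ has diameter~$\leq R$. Hence $h_*$ is a well-defined chain homotopy and $\huf_*(f;R) = \huf_*(f';R)$.

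Finally, the closing statement follows formally. Functoriality (checked alongside the first step, namely $\cuf_*(g \circ f;R) = \cuf_*(g;R) \circ \cuf_*(f;R)$) together with the homotopy invariance just established shows that if $g$ is a quasi-inverse of a quasi-isometry~$f$, then $\huf_*(g;R) \circ \huf_*(f;R) = \huf_*(g \circ f;R) = \huf_*(\id_X;R) = \id$ and symmetrically in the other order, so $\huf_*(f;R)$ is an isomorphism. I expect the main obstacle to be not the prism algebra but the geometric bookkeeping: ensuring that the image chains and the homotopy chains both remain uniformly finite, which is precisely what forces the combined use of bounded geometry (for the coefficients) and of uniform closeness with the quasi-isometry constants (for the propagation).
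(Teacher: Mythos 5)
Your proof is correct. The paper itself gives no proof of this proposition --- it appears in the review appendix with a citation to Block and Weinberger~\cite{blockweinberger} --- and your argument (bounding preimages via the lower quasi-isometry estimate together with bounded geometry, preserving propagation via the upper estimate, the prism homotopy whose propagation bound comes from uniform closeness, and deducing the isomorphism statement from functoriality plus homotopy invariance) is precisely the standard argument establishing it.
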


In view of Proposition~\ref{prop:qi-invariance}, uniformly finite homology 
with coefficients in a normed ring~$R$ is a functor from the category $\UDBG$ to the category~$\Mod_*^R$ 
of graded $R$-modules:
\[
\begin{array}{cccc}
\huf_*(\args;R)\colon & \UDBG & \longrightarrow & \Mod_*^R\\
\text{on objects}\colon & X & \longmapsto & \huf_*(X;R)\\
\text{on morphisms}\colon & [f]\colon X\rightarrow Y & \longmapsto & \huf_*(f;\R)\colon\huf_*(X;R)\rightarrow\huf_*(Y;R).
\end{array}
\]

Uniformly finite homology was introduced by Block and Weinberger 
to study the large scale structure of metric spaces of bounded geometry~\cite{blockweinberger,nowakyu}. 
One of the main applications provided by Block and Weinberger is a 
characterisation of amenability for metric spaces of 
bounded geometry~\cite[Theorem 3.1]{blockweinberger}. 
Whyte used uniformly finite homology to develop a criterion 
to distinguish between quasi-isometries and bilipschitz equivalences 
in the case of UDBG spaces. We recall these two applications 
in Section~\ref{subsec:uffcl} 
(Theorem~\ref{thm:whyterigidity} and Theorem~\ref{thm:charamenability}).

\subsection{Uniformly finite homology of groups}\label{subsec:ufgroups}

As a consequence of quasi-isometry invariance (Proposition~\ref{prop:qi-invariance}) we obtain 
that for finitely generated groups uniformly finite homology is independent from the chosen word metric.
\begin{cor}
Let $G$ be a finitely generated group and let $d_S,d_T$ be the word metrics on $G$ associated to finite generating sets
 $S,T\subset G$. Then the identity map~$\id_G \colon (G,d_S)
  \longrightarrow (G,d_T)$ is a bilipschitz equivalence (whence a quasi-iso\-metry). Thus, the induced map~$\huf_*(\id_G;R)\colon\huf_*(G,d_S;R) \longrightarrow
  \huf_*(G,d_T;R)$ is an isomorphism for every normed ring~$R$ with unit.
  \end{cor}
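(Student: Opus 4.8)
The plan is to reduce the statement to the quasi-isometry invariance of uniformly finite homology (Proposition~\ref{prop:qi-invariance}) by first establishing that the identity map is a bilipschitz equivalence; the homological conclusion then follows immediately, since every bilipschitz equivalence is in particular a (bijective) quasi-isometry in the sense of Definition~\ref{defi:qi_and_be}.

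First I would produce explicit bilipschitz constants by comparing word lengths. Write $|\cdot|_S$ and $|\cdot|_T$ for the word-length functions, so that $d_S(g,h) = |g^{-1}h|_S$ and likewise for~$T$; both word metrics are left-invariant. Each generator $t \in T$ lies in~$G$ and hence has finite $S$-length, and symmetrically for $s \in S$, so I would set
\[
  C := \max\Bigl\{\,\max_{t \in T} |t|_S,\ \max_{s \in S} |s|_T\,\Bigr\} \in \N_{>0}.
\]
Given $g \in G$, writing $g$ as a $T$-word of minimal length~$|g|_T$ and replacing each occurring generator by an $S$-word of length at most~$C$, the triangle inequality yields $|g|_S \leq C \cdot |g|_T$, and symmetrically $|g|_T \leq C \cdot |g|_S$. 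By left-invariance, applied to $g^{-1}h$, these estimates give, for all $g,h \in G$,
\[
  \frac{1}{C} \cdot d_S(g,h) \leq d_T(g,h) \leq C \cdot d_S(g,h),
\]
so that $\id_G \colon (G,d_S) \longrightarrow (G,d_T)$ is a bilipschitz embedding.

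Since $\id_G$ is a bijection whose inverse is the identity $(G,d_T) \longrightarrow (G,d_S)$, satisfying the analogous estimates, it is a bilipschitz equivalence, and therefore a bijective quasi-isometry. Applying Proposition~\ref{prop:qi-invariance} to this quasi-isometry shows that the induced map $\huf_*(\id_G;R) \colon \huf_*(G,d_S;R) \longrightarrow \huf_*(G,d_T;R)$ is an isomorphism for every normed ring~$R$ with unit.

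I do not expect a serious obstacle here: the only genuine content is the word-length comparison, which is the standard argument that any two finite generating sets give bilipschitz-equivalent word metrics; the homological statement is then purely formal via the cited quasi-isometry invariance.
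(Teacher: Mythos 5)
Your proposal is correct and follows exactly the route the paper intends: the paper states this corollary as an immediate consequence of Proposition~\ref{prop:qi-invariance}, leaving the (standard) word-length comparison implicit, and your argument simply fills in that comparison before citing the same proposition. No gaps; the only negligible edge case is the trivial group with an empty generating set, where the statement holds trivially anyway.
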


We now recall homology of groups with twisted coefficients. Proposition~\ref{prop:grouptwisted} shows that in the case of finitely generated groups uniformly finite homology is isomorphic to group homology with coefficients in the module of bounded functions on the group.

\begin{defi}[group homology] 
Let $G$ be a group and $R$ be a ring with unit. We consider $\bigl(C_n(G;R),\partial_n\bigr)_{n\in\N}$ to be the $R[G]$-chain complex, where for each $n\in\N$ we have:
\begin{itemize}
\item The module $C_n(G;R)$ is the free $R$-module with the basis~$G^{n+1}$ (with the $R[G]$-structure induced from the diagonal action on~$G^{n+1}$).
\item The operator $\partial_n$ is the standard boundary map given by 
\begin{align*}
\partial_n\colon C_n(G;R)&\longrightarrow C_{n-1}(G;R)\\
(g_0,\dots,g_n)&\longmapsto \sum_{j=0}^{n}(-1)^{j}\cdot (g_0,\dots,\widehat{g}_j,\dots,g_n).
\end{align*}
\end{itemize}
Let $A$ be a left $R[G]$-module and let $\overline{C}_*(G;R)$ be the right $R[G]$-module obtained from~$C_*(G;R)$ via the canonical involution $G\rightarrow G$, $g\mapsto g^{-1}$. The \emph{homology of $G$ with coefficients in $A$} is the homology of the $R$-chain complex~$C_*(G;A) := \overline C_*(G;R) \otimes_{R[G]} A$.
\end{defi}

Let $R$ be a ring with unit endowed with a norm $|\cdot|$. The space $\ell^{\infty}(G,R)$ of functions $\varphi\colon G\rightarrow R$ that are bounded with respect to the supremum norm~$\supn{\varphi}:= \sup_{g\in G}|\varphi(g)|$ has a natural $R[G]$-module structure with respect to the action
\begin{align*}
G\times\ell^{\infty}(G,R) &\longrightarrow \ell^{\infty}(G,R)\\
(g,\varphi) &\longmapsto \bigl(g\cdot\varphi\colon g'\longmapsto \varphi(g^{-1}\cdot g')\bigr).
\end{align*}

Notice that, in the case of uniformly finite homology, the simplices of a given uniformly finite chain are tuples in~$G^{n+1}$ of uniformly bounded diameter; therefore they are contained in the $G$-orbit of finitely many simplices of the form $(e,t_1,\dots,t_n)\in G^{n+1}$. 
Hence, we have~\cite{bnw}\cite[Proposition~2.2.4]{dianathesis}:
\begin{prop}[uniformly finite homology as group homology]\label{prop:grouptwisted}
Let $G$ be a finitely generated group endowed with the word metric with respect to some finite generating set and let $R$ be a normed ring with unit. For~$n\in\N$ consider 
\begin{align*}
\rho_n\colon \cuf_n(G;R) & \longrightarrow C_n\bigl(G;\ell^{\infty}(G,R)\bigr)\\
\sum_{\ol{g}\in G^{n+1}}c_{\ol{g}}\cdot\ol{g} &\longmapsto \sum_{t=(t_1,\dots,t_n)\in G^n}(e,t_1,\dots,t_n)\otimes\varphi_{c,t}
\end{align*}
where for all~$t\in G^{n}$ the map $\varphi_{c,t}\in\ell^{\infty}(G,R)$ is given by
\[
\varphi_{c,t}\colon g\longmapsto c_{g^{-1}\cdot(e,t_1,\dots,t_n)}.
\]
Then $\rho_* \colon \cuf_*(G;R) \longrightarrow
C_*(G;\ell^\infty(G,R))$ is a chain isomorphism; in particular, $\rho_*$ 
induces an isomorphism
$
H_*(\rho_*) \colon \huf_*(G;R)\longrightarrow H_*(G;\ell^{\infty}(G,R)).
$
\end{prop}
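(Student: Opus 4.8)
The plan is to prove that $\rho_*$ is an isomorphism of $R$-chain complexes; the statement about homology is then immediate. I would proceed in three steps: well-definedness and $R$-linearity, bijectivity, and compatibility with the boundary operators.

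First I would verify that $\rho_n$ actually takes values in $C_n(G;\ell^\infty(G,R))$. Two things must be checked: that only finitely many of the $\varphi_{c,t}$ are nonzero, and that each $\varphi_{c,t}$ is bounded. Since $\varphi_{c,t}(g)=c_{(g^{-1},g^{-1}t_1,\dots,g^{-1}t_n)}$ and left translation is an isometry for the word metric, the simplex $(g^{-1},g^{-1}t_1,\dots,g^{-1}t_n)=g^{-1}\cdot(e,t_1,\dots,t_n)$ has the same diameter as $(e,t_1,\dots,t_n)$; hence the support condition on uniformly finite chains forces $\varphi_{c,t}$ to vanish identically unless $(e,t_1,\dots,t_n)$ has diameter at most the support radius of $c$, and by bounded geometry only finitely many $t$ satisfy this. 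Boundedness of each $\varphi_{c,t}$ is immediate from the uniform bound on the coefficients of $c$, and $R$-linearity of $\rho_n$ is clear.

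Next I would establish bijectivity by describing the inverse. The assignment $(g_0,\dots,g_n)\mapsto\bigl(g_0^{-1},(g_0^{-1}g_1,\dots,g_0^{-1}g_n)\bigr)$ is a bijection $G^{n+1}\to G\times G^n$, and $\rho_n$ is exactly the relabelling of coefficients along it: one checks $\varphi_{c,t}(g_0^{-1})=c_{\ol g}$ for $\ol g=(g_0,\dots,g_n)$ and $t=(g_0^{-1}g_1,\dots,g_0^{-1}g_n)$. Under this relabelling the defining conditions of the two modules correspond precisely --- the bounded-diameter support of a uniformly finite chain matches finite support in the type variable $t$, and the uniform coefficient bound matches boundedness of the finitely many $\varphi_{c,t}$ (a single uniform bound being recovered as the maximum over the finite support). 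This gives an explicit two-sided inverse.

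The main work is the third step, the chain-map identity $\partial_n\circ\rho_n=\rho_{n-1}\circ\partial_n$. Both complexes carry the same simplicial boundary, so for the faces $j\ge 1$ the comparison is pure bookkeeping: deleting the entry $t_j$ from $(e,t_1,\dots,t_n)$ leaves a simplex already in standard form, and summing over all $n$-simplices whose $j$-th face is a fixed $(n-1)$-simplex reproduces the corresponding summand of $\rho_{n-1}(\partial_n c)$ after re-indexing the inserted vertex. The delicate face is $j=0$: here $\partial_0(e,t_1,\dots,t_n)=(t_1,\dots,t_n)$ is not of standard form, and one must use the $R[G]$-relation in $\overline{C}_*(G;R)\otimes_{R[G]}\ell^\infty(G,R)$ to move the base point back to $e$, which replaces the coefficient $\varphi_{c,t}$ by $t_1^{-1}\cdot\varphi_{c,t}$. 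This is precisely the obstacle: the twist by the $G$-action on the $\ell^\infty$-coefficients is what encodes the shift of base point from $g_0$ to $g_1$ caused by deleting the $0$-th vertex. A short substitution --- absorbing the free group element produced by the vertex insertion into the argument of $\varphi_{c,t}$ --- shows that this twisted term equals the $j=0$ summand of $\rho_{n-1}(\partial_n c)$. With $\rho_*$ thus a chain isomorphism, the induced map $H_*(\rho_*)\colon\huf_*(G;R)\to H_*(G;\ell^\infty(G,R))$ is an isomorphism, as claimed.
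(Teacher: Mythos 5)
Your proof is correct. The paper itself gives no proof of this proposition --- it only records the key finiteness observation (uniformly finite chains have simplices of uniformly bounded diameter, hence supported in the $G$-orbits of finitely many standard simplices $(e,t_1,\dots,t_n)$) and defers to Brodzki--Niblo--Wright and to Diana's thesis --- and your argument develops exactly that observation into a complete verification, correctly handling the one genuinely delicate point: the $j=0$ face, where the relation in $\overline{C}_*(G;R)\otimes_{R[G]}\ell^{\infty}(G,R)$ twists the coefficient to $t_1^{-1}\cdot\varphi_{c,t}$ and a change of variables (absorbing the inserted vertex into the argument of $\varphi_{c,t}$) identifies the result with the $j=0$ summand of $\rho_{n-1}(\partial_n c)$.
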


\subsection{The fundamental class in uniformly finite homology}\label{subsec:uffcl}

In degree~$0$ there is a canonical uniformly finite homology class, the fundamental class:

\begin{defi}[fundamental class]
Let $X$ be a UDBG space and let $R$ be a normed ring with unit. The \emph{$R$-fundamental class of $X$ in~$\huf_0(X;R)$} is the class~$[X]_R\in\huf_0(X;R)$ represented by the cycle $\sum_{x\in X}1 \cdot x\in \cuf_0(X;R)$. 
\end{defi}

We recall now a central application of uniformly finite homology, due to Whyte~\cite[Theorem 1.1]{whyte}, namely a criterion to distinguish between quasi-isometries and bilipschitz equivalences in the case of UDBG spaces.
\begin{thm}[bilipschitz equivalence rigidity]\label{thm:whyterigidity}
Let $X,Y$ be UDBG spaces and let $f\colon X\longrightarrow~Y$ be a quasi-isometry. 
Then $f$ is uniformly close to a bilipschitz equivalence if and only if $\huf_0(f;\Z)(\fclz X)=\fclz Y$.
\end{thm}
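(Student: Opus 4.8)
The plan is to prove the two implications separately, dispatching the easy direction first. For ``$\Rightarrow$'': if $f$ is uniformly close to a bilipschitz equivalence $g\colon X\longrightarrow Y$, then $g$ is in particular a bijection, so $\cuf_0(g;\Z)$ carries the fundamental cycle $\sum_{x\in X}1\cdot x$ to $\sum_{y\in Y}1\cdot y$; hence $\huf_0(g;\Z)(\fclz X)=\fclz Y$. Since $f$ and $g$ are uniformly close, Proposition~\ref{prop:qi-invariance} gives $\huf_0(f;\Z)=\huf_0(g;\Z)$, and the claim follows.

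The content is in ``$\Leftarrow$''. First I would make the hypothesis explicit on chains. Write $m_f(y):=|f^{-1}(y)|$ for $y\in Y$; since $f$ is a quasi-isometry of UDBG spaces, preimages lie in balls of bounded radius, so $m_f$ is uniformly bounded. The induced chain map sends the fundamental cycle of $X$ to $\sum_{y\in Y}m_f(y)\cdot y$, so the hypothesis $\huf_0(f;\Z)(\fclz X)=\fclz Y$ says exactly that the $0$-cycle $\sum_{y\in Y}(m_f(y)-1)\cdot y$ is a boundary in $\cuf_0(Y;\Z)$. By the vanishing criterion (Theorem~\ref{thm:deg0}) this is equivalent to the existence of constants $C,r\in\N$ with
\[
\fa{F\subset Y\text{ finite}}\biggl|\sum_{y\in F}\bigl(m_f(y)-1\bigr)\biggr|\leq C\cdot|\partial_r F|,
\]
that is, the excess and deficit of preimages of $f$ are balanced on every scale, up to a controlled boundary term.

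The heart of the argument is to upgrade this numerical balance into an honest bijection. I would encode the situation as a bipartite graph with left vertex set $X$ and right vertex set $Y$, joining $x$ to $y$ whenever $d_Y(f(x),y)\leq s$ for a fixed, sufficiently large radius $s$; bounded geometry together with the coarse surjectivity and bounded-to-one behaviour of $f$ make this graph locally finite of uniformly bounded degree. For a finite region the number of left vertices that can be sent into it is governed by the sums $\sum m_f$ above, while the number of right vertices is its cardinality, so the F\o lner-type inequality is precisely what is needed to verify Hall's condition uniformly on both sides (after comparing supply in a region with demand in its $r$-neighbourhood). An infinite, uniformly bounded version of Hall's marriage theorem --- a deep-matching, i.e.\ a max-flow--min-cut argument on graphs of bounded geometry --- then yields a perfect matching, and reading it off gives a bijection $g\colon X\longrightarrow Y$ with $\sup_{x\in X}d_Y(f(x),g(x))\leq s<\infty$.

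Finally, a bijective quasi-isometry of UDBG spaces is a bilipschitz equivalence (Definition~\ref{defi:qi_and_be}), and $g$ is uniformly close to $f$ by construction, so $f$ is uniformly close to a bilipschitz equivalence. The main obstacle is the matching step: one must check that the scalar vanishing criterion, which only controls signed sums of multiplicities, is strong enough to meet the combinatorial Hall condition, and that the matching can be realized with a displacement bound independent of the finite region --- this is exactly where bounded geometry and the uniform constant $C$ enter essentially.
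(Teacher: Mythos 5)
Your statement is one the paper does \emph{not} prove: Theorem~\ref{thm:whyterigidity} is recalled from Whyte~\cite[Theorem~1.1]{whyte}, so your attempt has to be measured against Whyte's published argument, whose overall strategy your outline in fact follows. Your easy direction (``$\Rightarrow$'') is complete and correct: a bilipschitz equivalence is a bijection, hence carries the fundamental cycle to the fundamental cycle, and uniform closeness gives equality of induced maps by Proposition~\ref{prop:qi-invariance}. Your setup for ``$\Leftarrow$'' is also correct: $m_f$ is uniformly bounded, $\cuf_0(f;\Z)$ sends the fundamental cycle of $X$ to $\sum_{y\in Y}m_f(y)\cdot y$, and Theorem~\ref{thm:deg0} converts the hypothesis into the inequality $\bigl|\sum_{y\in F}(m_f(y)-1)\bigr|\leq C\cdot|\partial_r F|$ for all finite $F\subset Y$.

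The genuine gap is the step you yourself flag as the main obstacle, and the justification you sketch for it is not merely incomplete but false as stated. Writing $N_s(E)$ for the $s$-neighbourhood in $Y$ of a finite set $E\subset Y$, Hall's condition on the $X$-side of your bipartite graph amounts to: for every finite $E\subset Y$ one has $|N_s(E)|\geq\sum_{y\in E}m_f(y)$. The scalar inequality gives $\sum_{y\in E}m_f(y)\leq|E|+C\cdot|\partial_r E|$, so ``comparing supply with demand in a neighbourhood'' requires $|N_s(E)|-|E|\geq C\cdot|\partial_r E|$, with $s$ chosen after $C$ and $r$. This fails in general: in $Y=\Z$ take $E$ to be the even integers in $[0,2N]$. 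Then $|N_s(E)|-|E|=N+2s$, while $|\partial_r E|\geq 2N$ for any $r\geq 1$ (every point of $[-r,2N+r]$ is within distance~$r$ of both $E$ and its complement), so the required inequality fails for every choice of $s$ once $C>1/2$ and $N$ is large; applying the scalar inequality to $N_s(E)$ instead of $E$ runs into the same boundary term. Note that Hall's condition itself is \emph{not} violated in such examples --- the theorem is true --- but it cannot be extracted from the vanishing criterion one finite set at a time, because surpluses spread over a set whose $r$-boundary is comparable to the set itself are invisible to this comparison; the hypothesis constrains them only through sums over \emph{all} finite sets simultaneously. This is exactly why the published proofs (Whyte~\cite{whyte}; see also the exposition in Nowak--Yu~\cite{nowakyu}) do not argue from the scalar inequality but from an integral uniformly finite $1$-chain $T$ with $\partial T=\sum_{y\in Y}(m_f(y)-1)\cdot y$, which exists by the hypothesis and has bounded coefficients and bounded edge lengths: viewed as an integer flow from surplus to deficit points, flow conservation is what lets one redistribute mass one edge at a time (or verify the Hall-type condition) with displacement controlled by the propagation of $T$ and the bounded-geometry constants. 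As written, the combinatorial heart of the proof --- precisely the part that is the technical content of Whyte's theorem --- is asserted rather than proved, and the route you indicate for it does not go through.
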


One step in Whyte's proof is the following characterisation of the trivial class in
uniformly finite homology in degree~$0$~\cite[Theorem~7.6]{whyte}:
\begin{thm}[vanishing criterion in degree~$0$]\label{thm:deg0}
  Let $X$ be a UDBG space, and let $c = \sum_{x\in X} c_x \cdot x$ be a
  cycle in~$\cuf_0(X;\Z)$.  Then $[c] = 0 \in \huf_0(X;\Z)$ if and
  only if there exist constants~$C, r \in \N$ such that for all finite
  subsets~$F \subset X$ we have
  \[ C \cdot |\partial_r F| \geq \biggl| \sum_{x \in F} c_x \biggr|.
  \]
\end{thm}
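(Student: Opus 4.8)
The plan is to prove the two implications separately; the forward (``only if'') direction is a short bounded-geometry computation, while the reverse direction is the substantial part and proceeds by a max-flow/compactness argument.

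For the ``only if'' direction, suppose $[c]=0$, so $c=\partial_1 b$ for some $b=\sum_{(x,y)}b_{(x,y)}\cdot(x,y)\in\cuf_1(X;\Z)$. By definition of uniformly finite chains there are $K\in\R_{>0}$ and $r\in\N$ with $|b_{(x,y)}|\le K$ for all $(x,y)$ and $b_{(x,y)}=0$ whenever $d(x,y)>r$. Fix a finite $F\subset X$ and sum the coefficient identity $c_x=\sum_y b_{(y,x)}-\sum_y b_{(x,y)}$ over $x\in F$. Every edge with both endpoints in $F$ contributes once with each sign and cancels, so only edges joining $F$ to $X\setminus F$ survive; since these have length $\le r$, their exterior endpoints lie in $\partial_r F$, and bounded geometry bounds the number of such edges by $K'\cdot|\partial_r F|$ for a constant $K'$ depending only on $r$ and $X$. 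Hence $|\sum_{x\in F}c_x|\le KK'\cdot|\partial_r F|$, which is the asserted inequality with $C:=KK'$.

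For the converse, I would read the inequality as a feasibility (min-cut) condition for a bounded integer flow. Fix $C,r$ as in the hypothesis, note that $c$ is bounded (say $|c_x|\le N$), and form the bounded-degree graph $\Gamma$ on $X$ whose edges join distinct points at distance $\le r$; I seek $b\in\cuf_1(X;\Z)$ supported on the edges of $\Gamma$, bounded by a uniform $M$, with net inflow $c_x$ at every $x$. Choose an exhaustion $X=\bigcup_n X_n$ by finite sets. On each $X_n$ I set up a transshipment network: each vertex $x$ deep in $X_n$ carries supply/demand $c_x$, each $\Gamma$-edge inside $X_n$ gets capacity $M$, and imbalance is allowed to escape through an auxiliary leakage node attached to the vertices near the boundary of $X_n$. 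By the max-flow--min-cut theorem (transshipment feasibility, Gale's theorem) an integral feasible flow exists provided every interior cut has enough capacity; a cut is described by a subset $F$, the deficit to be covered is $|\sum_{x\in F}c_x|\le C\cdot|\partial_r F|$ by hypothesis, while the crossing $\Gamma$-edges have total capacity at least $M\cdot|\partial_r F|$ because every exterior $r$-neighbour of $F$ is joined to $F$ by at least one edge of $\Gamma$. Thus $M:=\lceil C\rceil$ makes every finite feasibility condition hold, and by integrality each finite flow takes values in $\{-M,\dots,M\}$. Extending each finite flow by $0$ places the whole family in the compact space $\prod_{\text{edges}}\{-M,\dots,M\}$; passing to an ultralimit along a non-principal ultrafilter on $\N$ yields a global edge function $b$ bounded by $M$ and supported on edges of length $\le r$, so $b\in\cuf_1(X;\Z)$. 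Since every vertex $x$ lies deep inside $X_n$ for large $n$, all its incident edges are present and its divergence there is forced to equal $c_x$; hence the limit satisfies $\partial_1 b=c$ and $[c]=0$.

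The main obstacle is precisely this reverse direction: one must match the combinatorial min-cut condition to the given isoperimetric inequality with a single capacity $M$ valid for all finite pieces simultaneously, and then argue that the compactness limit repairs the boundary leakage so that the divergence becomes \emph{exactly} $c$ at every vertex. Bounded geometry enters twice with opposite inequalities—an upper bound on crossing edges for the easy direction and a lower bound (each exterior $r$-neighbour forces a crossing edge) for the feasibility estimate—and the uniformity of $M$, independent of $n$, is exactly what keeps the limiting chain uniformly finite.
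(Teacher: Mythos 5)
The first thing to say is structural: the paper does \emph{not} prove Theorem~\ref{thm:deg0} at all. It is quoted as a known result of Whyte~\cite[Theorem~7.6]{whyte} and used as a black box (in Proposition~\ref{prop:fclbig} and Proposition~\ref{prop:deg0R}), so there is no in-paper proof to compare yours against; the relevant benchmark is Whyte's original argument, and your proposal is essentially a reconstruction of it: the ``only if'' direction by summing the boundary identity over~$F$ so that edges interior to~$F$ cancel, the ``if'' direction by converting the isoperimetric hypothesis into a min-cut feasibility condition over a finite exhaustion, invoking integral transshipment feasibility (Gale/Hoffman), and passing to a global solution by compactness.

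Both directions of your outline are sound, but two points need care in a write-up. (i)~The capacity estimate depends on the convention for~$\partial_r F$: if (as in Whyte's paper) $\partial_r F$ is the two-sided boundary, containing interior as well as exterior boundary points, then your observation that every exterior $r$-neighbour of~$F$ meets a crossing edge only bounds the number of crossing edges from below by the exterior part of~$\partial_r F$. Since distinct interior boundary points also give distinct crossing edges, one still gets $\#\{\text{crossing edges}\}\geq \tfrac12\,|\partial_r F|$, so feasibility holds with $M=2\lceil C\rceil$ rather than $\lceil C\rceil$ as you stated; harmless, but the inequality ``capacity at least $M\cdot|\partial_r F|$'' is not quite what your argument gives. (ii)~Gale's condition must be checked for \emph{all} cuts of the auxiliary network, not only for subsets deep inside~$X_n$: cuts containing a leakage-attached vertex but not the leakage node have infinite outgoing capacity, and cuts containing the leakage node reduce, by passing to the complement (which is then deep in~$X_n$, so that its $r$-boundary lies entirely inside~$X_n$), to the interior case with~$c$ replaced by~$-c$. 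Both cases are easy, but this case distinction is precisely where the uniformity of~$M$ over~$n$ is established, so it should appear explicitly. With these repairs your proposal is a complete and correct proof, following the same route as the source the paper cites.
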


These boundary conditions are closely related to amenability;  
let us recall the definition of amenability for UDBG spaces: 
\begin{defi}[amenable UDBG space]\label{defi:amenableUDBG}
A UDBG space $X$ is \emph{amenable} if it admits a \emph{F\o lner sequence}, i.e., sequence~$(S_n)_{n\in\N}$ of non-empty finite subsets~$S_n\subset X$ such that 
\[
\fa{r\in\R_{>0}} \lim_{n\to\infty}\frac{\left|\partial_r(S_n)\right|}{|S_n|}=0.
\]
\end{defi}
\begin{rem}
We can consider finitely generated groups as UDBG spaces by endowing them with word metrics for finite generating sets. In this case, Definition~\ref{defi:amenableUDBG} is equivalent to the standard definition of amenability using F\o lner sequences~\cite[Definition 4.7.2]{ceccherini}.
\end{rem}
Whyte used the vanishing criterion Theorem~\ref{thm:deg0} to provide a new proof of a characterisation of amenability for UDBG spaces~\cite[Theorem 7.1]{whyte}. The original result is due to Block and Weinberger~\cite[Theorem 3.1]{blockweinberger} and it is one of the key applications of uniformly finite homology. 
\begin{thm}[characterisation of amenability]\label{thm:charamenability}
Let $X$ be a UDBG space. The following are equivalent:
\begin{enumerate}
\item The UDBG space~$X$ is non-amenable.
\item We have~$\huf_0(X;\R)=0$.
\item We have~$\huf_0(X;\Z)=0$.
\item We have~$\fclz X = 0$.
\end{enumerate}
\end{thm}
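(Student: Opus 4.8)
My plan is to route all four equivalences through a single uniform isoperimetric inequality, which I abbreviate by (IP): there exist constants $C, r \in \N$ such that $|F| \le C \cdot |\partial_r F|$ for every finite subset $F \subseteq X$. I will prove ``$X$ non-amenable $\Leftrightarrow$ (IP)'' (a F\o lner-type criterion) and ``(IP) $\Leftrightarrow$ vanishing of $\huf_0(X;R)$'' for $R \in \{\Z, \R\}$, using the fundamental cycle $\sum_{x \in X} 1 \cdot x$ as the hinge between the homological conditions and (IP). Since $\partial_0 = 0$, every uniformly finite $0$-chain is automatically a cycle, so I may speak of arbitrary bounded $0$-chains throughout.

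For the F\o lner criterion, the direction ``(IP) $\Rightarrow$ non-amenable'' is immediate: (IP) forces $|\partial_r F|/|F| \ge 1/C$ for all finite nonempty $F$, so no sequence can satisfy the F\o lner condition at scale $r$, and $X$ cannot be amenable. For the converse I would argue by contraposition. If (IP) fails, then applying its negation with $C = r = n$ yields, for each $n \in \N_{>0}$, a finite nonempty set $F_n \subseteq X$ with $|\partial_n F_n| < |F_n|/n$. Since $\partial_{r_0} F \subseteq \partial_n F$ whenever $r_0 \le n$, for every fixed integer scale $r_0$ I obtain $|\partial_{r_0} F_n|/|F_n| \le |\partial_n F_n|/|F_n| < 1/n$ for all $n \ge r_0$, which tends to $0$; monotonicity of $\partial_r$ in $r$ lets these integer scales control all real scales, so $(F_n)_n$ is a F\o lner sequence and $X$ is amenable. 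I expect this step --- manufacturing a single F\o lner sequence valid at every scale out of the scale-by-scale failure of (IP) --- to be the main obstacle, and the diagonal choice $C = r = n$ together with the nesting $\partial_{r_0} F \subseteq \partial_n F$ is exactly what resolves it.

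It then remains to tie (IP) to the homology groups via Theorem~\ref{thm:deg0}. First, (IP) $\Rightarrow \huf_0(X;\Z) = 0$: given any $0$-chain $c = \sum_{x \in X} c_x \cdot x$ with $\sup_x |c_x| = K < \infty$, every finite $F$ satisfies $\bigl| \sum_{x \in F} c_x \bigr| \le K \cdot |F| \le K C \cdot |\partial_r F|$, so the vanishing criterion gives $[c] = 0$, whence $\huf_0(X;\Z) = 0$. The identical estimate, combined with the real-coefficient version of the vanishing criterion (which holds by the same transport argument as Theorem~\ref{thm:deg0}, applied verbatim over $\R$), gives $\huf_0(X;\R) = 0$. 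This establishes $(1) \Rightarrow (2)$ and $(1) \Rightarrow (3)$.

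Conversely, $\huf_0(X;\Z) = 0$ trivially forces $\fclz X = 0$, which is $(3) \Rightarrow (4)$. To close both directions I would feed the fundamental cycle $\sum_{x \in X} 1 \cdot x$ into the criterion: its restricted coefficient sum over any finite $F$ is exactly $\sum_{x \in F} 1 = |F|$, so Theorem~\ref{thm:deg0} shows that $\fclz X = 0$ produces constants $C, r$ realizing (IP), i.e. $(4) \Rightarrow$ (IP); the same computation with the real criterion shows $\huf_0(X;\R) = 0 \Rightarrow \fclr X = 0 \Rightarrow$ (IP), i.e. $(2) \Rightarrow$ (IP). Assembling everything gives (IP) $\Leftrightarrow (1)$ by the F\o lner criterion, (IP) $\Rightarrow (2)$ and $(3)$ by the bounded-coefficient argument, $(3) \Rightarrow (4) \Rightarrow$ (IP), and $(2) \Rightarrow$ (IP); hence all four conditions are equivalent.
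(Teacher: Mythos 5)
The paper never proves Theorem~\ref{thm:charamenability} itself: it is recalled in Appendix~\ref{appx:uf} with citations to Block--Weinberger~\cite[Theorem~3.1]{blockweinberger} and Whyte~\cite[Theorem~7.1]{whyte}, together with the remark that Whyte's proof runs through the vanishing criterion Theorem~\ref{thm:deg0}. Your proposal reconstructs exactly that route, so it is the intended argument rather than a different one, and most of it is sound. The F\o lner half is correct: (IP)~$\Rightarrow$ non-amenable is immediate, and your diagonal choice $C=r=n$ combined with the monotonicity of $r \mapsto \partial_r F$ does produce a single F\o lner sequence from the scale-by-scale failure of (IP). The integral loop (IP)~$\Rightarrow \huf_0(X;\Z)=0 \Rightarrow \fclz X = 0 \Rightarrow$~(IP) is a correct use of the two directions of Theorem~\ref{thm:deg0} (rounding $KC$ up to an integer where needed). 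Also, for $(2) \Rightarrow$~(IP) you only need the \emph{easy} direction of the criterion over $\R$ (write $\fclr X = \partial b$ with $b \in \cuf_1(X;\R)$ of bounded coefficients and propagation, and count the edges leaving a finite set $F$); that computation genuinely is coefficient-independent, so this step is fine.

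The one genuine gap is the step (IP)~$\Rightarrow \huf_0(X;\R)=0$, where you invoke the \emph{hard} direction of the vanishing criterion with $\R$-coefficients and justify it by saying the proof of Theorem~\ref{thm:deg0} applies ``verbatim over $\R$''. Theorem~\ref{thm:deg0} is stated (and cited) only for $\Z$-coefficients, and its hard direction is proved by Whyte via matching/Hall-type arguments that exploit integrality, so ``verbatim'' is not a justification --- the integral and real cases genuinely require different arguments. The statement you need is true, and there is a clean repair inside your own architecture: from (IP) you already know $X$ is non-amenable, which (by a Hall-type argument on a bipartite graph) yields a map $\varphi \colon X \longrightarrow X$ at bounded distance from $\id_X$ with $|\varphi^{-1}(x)| \geq 2$ for all $x \in X$; given any bounded real $0$-chain $c$, a contraction/geometric-series (``Ponzi scheme'') construction along the preimage trees of $\varphi$ produces $u \in \ell^\infty(X;\R)$ with $\|u\|_\infty \leq \supn{c}$ and a uniformly finite $1$-chain $b = \sum_{x \in X} u_x \cdot (\varphi(x),x)$ with $\partial b = -c$; note that this argument divides by $|\varphi^{-1}(x)|$ and so works over $\R$ precisely where the integral argument cannot. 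Alternatively, one can simply quote Block--Weinberger~\cite[Theorem~3.1]{blockweinberger} for $(1) \Leftrightarrow (2)$, which is what the paper does. With that single substitution your proof is complete.
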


Theorem~\ref{thm:whyterigidity} and
Theorem~\ref{thm:charamenability} imply the following rigidity
result for non-amenable spaces:

\begin{cor}\label{cor:rignonamenable}
  Any quasi-isometry between non-amenable UDBG spaces is uniformly close to a bilipschitz equivalence.
\end{cor}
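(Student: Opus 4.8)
The plan is to observe that Whyte's rigidity criterion (Theorem~\ref{thm:whyterigidity}) becomes vacuous for non-amenable spaces, because the fundamental classes on both sides vanish identically. Concretely, the whole content of the corollary is packaged in the interplay between Theorem~\ref{thm:whyterigidity} and the characterisation of amenability in Theorem~\ref{thm:charamenability}; the only task is to verify that the hypotheses of the former are automatically met.

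First I would let $X$ and $Y$ be non-amenable UDBG spaces and let $f\colon X \longrightarrow Y$ be a quasi-isometry. By the characterisation of amenability (Theorem~\ref{thm:charamenability}, equivalence of~(1) and~(4)), non-amenability of~$X$ forces $\fclz X = 0 \in \huf_0(X;\Z)$, and non-amenability of~$Y$ forces $\fclz Y = 0 \in \huf_0(Y;\Z)$. Since $f$ is a quasi-isometry, it induces a homomorphism $\huf_0(f;\Z)\colon \huf_0(X;\Z) \longrightarrow \huf_0(Y;\Z)$ (Proposition~\ref{prop:qi-invariance}), and additivity of this map then gives
\[ \huf_0(f;\Z)(\fclz X) = \huf_0(f;\Z)(0) = 0 = \fclz Y. \]
Finally, Theorem~\ref{thm:whyterigidity} applies directly to the quasi-isometry~$f$ and yields that $f$ is uniformly close to a bilipschitz equivalence.

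There is no genuine obstacle here: the corollary is a one-line deduction from the two cited theorems, and the only point to record is that the hypothesis of Theorem~\ref{thm:whyterigidity}---namely that $f$ is a quasi-isometry between UDBG spaces---is precisely our standing assumption, so the rigidity equation $\huf_0(f;\Z)(\fclz X) = \fclz Y$ is the \emph{only} condition to be checked, and it holds trivially once both fundamental classes are known to be zero.
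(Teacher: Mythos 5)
Your proof is correct and follows exactly the route the paper intends: the paper itself states that Corollary~\ref{cor:rignonamenable} follows from Theorem~\ref{thm:whyterigidity} together with Theorem~\ref{thm:charamenability}, which is precisely your argument that both fundamental classes vanish by non-amenability, so the rigidity condition $\huf_0(f;\Z)(\fclz X)=0=\fclz Y$ holds trivially.
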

This answered a question originally stated by Gromov~\cite{gromov1}, which was answered by several authors using different tools~\cite{bogopolski, papasoglu, delaharpe}.


\medskip
\vfill

\noindent
\emph{Francesca Diana}\\[.5em]
  {\small
  \begin{tabular}{@{\qquad}l}
    \textsf{francesca.diana@mathematik.uni-regensburg.de}\\
    \textsf{http://homepages-nw.uni-regensburg.de/$\sim$dif13273/}
  \end{tabular}}\\[.5em]
\emph{Clara L\"oh}\\[.5em]
  {\small
  \begin{tabular}{@{\qquad}l}
    \textsf{clara.loeh@mathematik.uni-regensburg.de}\\
    \textsf{http://www.mathematik.uni-regensburg.de/loeh}\\
    Fakult\"at f\"ur Mathematik\\
    Universit\"at Regensburg\\
    93040 Regensburg\\
    Germany
  \end{tabular}}
\end{document}